\newtheorem{lemma}{Lemma}
\newtheorem{proposition}{Proposition}
\newtheorem{problem}{Problem}
\definecolor{steelblue}{RGB}{70,130,180}
\newcommand{\ts}{\textsuperscript}
\def\bbr{\mathbb R}
\def\bbs{\mathbb S}
\def\bbp{\mathbb P}
\def\bbe{\mathbb E}
\def\calp{\mathcal P}
\newcommand{\PP}{\mathbb{P}}
\newcommand{\norm}[1]{\left\|#1 \right\|}
\renewcommand{\norm}[1]{\left\lVert {#1} \right\rVert}
\title{\LARGE \bf
Distributionally Robust Covariance Steering\\ with Optimal Risk Allocation}
\author{Venkatraman Renganathan, \and Joshua Pilipovsky, \and Panagiotis Tsiotras 
\thanks{
V. Renganathan is with the Department of Automatic Control LTH, Lund University, Lund, Sweden. J. Pilipovsky and P. Tsoitras are with the School of Aerospace Engineering, Georgia Institute of Technology, Atlanta, GA 30332-0150 USA. E-mail: venkatraman.renganathan@control.lth.se, jpilipovsky3@gatech.edu, tsiotras@gatech.edu.
}
}
\begin{document}

\maketitle
\thispagestyle{empty}
\pagestyle{empty}


\begin{abstract}
This article extends the optimal covariance steering (CS) problem for discrete time linear stochastic systems modeled using moment-based ambiguity sets. 
To hedge against the uncertainty in the state distributions while performing covariance steering, distributionally robust risk constraints are employed during the optimal allocation of the risk. Specifically, a distributionally robust iterative risk allocation (DR-IRA) formalism is used to solve the optimal risk allocation problem for the CS problem using a two-stage approach. 
The upper-stage of DR-IRA is a convex problem that optimizes the risk, while the lower-stage optimizes the controller with the new distributionally robust risk constraints. The proposed framework results in solutions that are robust against arbitrary distributions in the considered ambiguity set. Finally, we demonstrate our proposed approach using numerical simulations. Addressing the covariance steering problem through the lens of distributional robustness marks the novel contribution of this article.
\end{abstract}


\begin{IEEEkeywords}
Covariance Steering, Distributional Robustness, Iterative Risk Allocation, Stochastic Systems, Moment Based Ambiguity Sets.
\end{IEEEkeywords}


\section{Introduction}

Intelligent and adaptive systems of the ``smart world'' that work under operational constraints seek to solve some instance of a constrained optimal control problem for optimizing their performance. 
Such constrained optimal control problems can now be increasingly solved efficiently using several numerical optimization techniques. 
For instance, robot path planning in uncertain environments~\cite{luders_ccrrt, luders_ccrrtstar, summers_risk, venki_risk, han_risk, ono_planning}
has gained the attention of researchers worldwide as robots are being increasingly deployed to solve many real-world problems.
Apart from realistic constraints, reliability of operation of these systems is often thwarted by the ineffective handling of system uncertainties, which can be either deterministic or stochastic. 

Control of stochastic systems can be best formulated as a problem of controlling the distribution of trajectories over time subject to constraints. Recently, the finite horizon covariance steering (CS) problem, namely, the problem of steering an initial distribution to a final distribution at a specific final time step subject to linear time varying dynamics has been explored~\cite{okamoto2018optimal}.
Specifically, the control problem in the CS problem setting involves steering the mean and the covariance to the desired terminal values. When the decision-making process relies blindly on the functional form of the process that models the stochastic uncertainty, it is known to result in potentially severe miscalculation of risk. For instance, Gaussianity assumptions made in the name of tractability in several modeling regimes are actually rarely justifiable, as the true distribution that governs the uncertain data might be non-Gaussian. Such shortcomings can be mitigated with risk-based stochastic optimization where the risk of wrong decisions can be appropriately handled to result in risk-averse decision making. One such tool is the Distributionally Robust Optimization (DRO) advocated in \cite{dr_goh, dr_wiesemann} which enables modelers to explicitly incorporate ambiguity in probability distributions into the optimization problem. 

Control of stochastic systems often involves optimizing the system's objective subject to chance constraints, where one assumes that the system uncertainties follow a known distribution 
and enforces that the system constraints hold with high probability as a function of the decision variables. 
The number of constraint violations, called the total risk budget, is usually a user-defined a priori specification and is a natural metric to assess risk.
Hence, one may consider the problem of finding a risk allocation procedure that will 
allocate the probability of violating each individual chance constraint at each time step. 
Given a number of chance constraints across a finite horizon, 
the total risk budget has to be allocated for all chance constraints across all time steps~\cite{sleiman_risk}. 
It is a common practice to consider a uniform risk allocation, i.e., allocate the same risk for all constraints and across all time steps. However, risk allocation can be optimized as in \cite{ono_iter_risk, vitus_risk_feedback} to reduce the conservatism resulting from a uniform risk allocation. 
If the probability distribution of the system uncertainties are known exactly, then non-uniform risk allocation can be performed effectively.
However, risk allocation optimization with arbitrary distribution of the system uncertainties has not yet been explored till now. 
This article addresses this shortcoming. 

\emph{Contributions:} Since authors in \cite{pilipovsky2021covariance} solved the CS problem for the Gaussian case, this article extends it with arbitrary distributions using the theory of distributional robustness (DR). 
To the best of our knowledge, this article is the first one to extend the CS problem using  distributionally robust optimization techniques for both polytopic and convex conic state constraint sets. Our main contributions in this article are as follows:

\begin{enumerate}

    \item We extend the covariance steering problem tailored between Gaussian distributions to arbitrary distributions modeled using moment based ambiguity sets.
    
    \item We enforce distributionally robust risk constraints for both polytopic and convex cone state constraint satisfaction, 
    while solving the covariance steering problem, and obtain the optimal risk allocation through a distributionally robust iterative risk allocation (DR-IRA) algorithm. 
    
    \item We demonstrate our approach using simulation examples and show the effectiveness of the proposed generalization for covariance steering problems between arbitrary distributions in moment-based ambiguity sets.
    
\end{enumerate}

Following a short summary of notation and preliminaries, the rest of the paper is organized as follows: 
The main problem statement of distributionally robust covariance steering problem with iterative risk allocation is 
presented in Section~\ref{sec_prob_formulation}. 
Then, the proposed Distributionally Robust Iterative Risk Allocation (DR-IRA) algorithm is discussed in Section~\ref{sec_dr_ira}. 
Subsequently, the proposed approach is demonstrated using simulation results in Section~\ref{sec_num_sim}. 
Finally, the paper is concluded in Section~\ref{sec_conclusion} along with directions for future research.


\section*{Notation and Preliminaries}

The set of real numbers and natural numbers are denoted by $\mathbb{R}$ and $\mathbb{N}$, respectively. The subset of natural numbers between and including $a$ and $b$ with $a < b$ is denoted by $[a,b]$. An identity matrix in dimension $n$ is denoted by $I_{n}$. For a non-zero vector $x \in \bbr^{n}$ and a matrix $P \in \bbs^{n}_{++}$ or $P \succ 0$, we denote $\left \| x \right \Vert^{2}_{P} = x^{\top} P x$, where $\bbs^{n}_{++}$ is the set of all positive definite matrices. We say $P \succ Q$ or $P \succeq Q$ if $P - Q \succ 0$ or $P - Q \succeq 0$ respectively. We denote by $\mathcal{B}(\bbr^{d})$ and $\mathcal{P}(\bbr^{d})$ the Borel $\sigma-$algebra on $\bbr^{d}$ and the space of probability measures on $(\bbr^{d}, \mathcal{B}(\bbr^{d}))$ respectively. A probability distribution with mean $\mu$ and covariance $\Sigma$ is denoted by $\mathbb{P}(\mu, \Sigma)$ and, specifically $\mathcal{N}_{d}(\mu, \Sigma)$, if the distribution is normal in $\mathbb{R}^{d}$.
The cumulative distribution function (cdf) of the normally distributed random variable is denoted by $\mathbf{\Phi}$. 
A uniform distribution over a compact set $A$ is denoted by $\mathcal{U}(A)$. Given a constant $q \in \bbr_{\geq 1}$, the set of probability measures in $\mathcal{P}(\bbr^{d})$ with finite $q-$th moment is denoted by $\mathcal{P}_{q}(\bbr^{d}) := \left\{ \mu \in \mathcal{P}(\bbr^{d}) \mid \int_{\bbr^{d}} \left \Vert x \right \|^{q} d\mu < \infty \right\}$.


\section{Problem Formulation} 
\label{sec_prob_formulation}

Consider a linear, stochastic, discrete and time-varying system as follows 
\begin{equation} \label{eqn_dt_stochastic_sys}
x_{k+1}= A_{k} x_k + B_{k} u_k + D_{k} w_k, \quad k = [0,N-1],
\end{equation}
where $x_{k} \in \bbr^n$ and $u_{k} \in \bbr^m$ is the system state and input at time $k$, respectively and $N$ denotes the total time horizon. 
Further, $A_{k} \in \mathbb{R}^{n \times n}$ is the dynamics matrix, $B_{k} \in \mathbb{R}^{n \times m}$ is the input matrix and $D_{k} \in \mathbb{R}^{n \times r}$ is the disturbance matrix. 
The process noise $w_k \in \bbr^r$ is a zero-mean random vector that is independent and identically distributed across time. We assume that the system state and the process noise is independent of each other at all time steps, meaning that $\bbe[x_{k} w_{j}] = 0$, for $0 \leq k \leq j \leq N$. 
The distribution $\mathbb{P}_{w}$ of $w_k$ is unknown but is assumed to belong to a moment-based ambiguity set of distributions, $\calp^w$ given by
\begin{equation}
\calp^w = \left\{ \bbp_{w} \mid \bbe[w_k] = 0, \bbe[w_k w_k^{\top}] = \Sigma_{w} \right\}.
\end{equation}
Note that there are infinitely many distributions present in the considered set $\calp^w$. 
For instance, both multivariate Gaussian and multivariate Laplacian distributions with zero mean and covariance $\Sigma_{w}$ belong to $\calp^w$ with the latter having heavier tail than the former. 
We assume that the system is controllable under zero process noise meaning that given any $x_{0}, x_{f} \in \mathbb{R}^{n}$, there exists a sequence of control inputs $\{u_k\}^{N-1}_{k=0}$ that steers the system state from $x_{0}$ to $x_{f}$. 
Since the considered system is stochastic, the initial condition $x_{0}$ is subject to a similar uncertainty model as the noise, with the distribution belonging to a moment-based ambiguity set, $\mathbb{P}_{x_{0}} \in \calp^{x_0}$, where 
\begin{equation} \label{eqn_ambig_x_0}
\footnotesize
    \calp^{x_0} = \left\{ \PP_{x_{0}} \mid \bbe[x_0] = \mu_0, \bbe[(x_0-\mu_0) (x_0-\mu_0)^{\top}] = \Sigma_{0} \right\}.
\end{equation}
Provided that the control law $u_k$ is selected as an affine function of state $x_k$ at any time $k$, similar moment-based ambiguity sets $\calp^{x_k}$ can be written for the distribution of states at any time $k \in [1,N]$ using the propagated mean and covariance at time $k$. 
Note that $\calp^{x_k}$ would \emph{not} be empty at all time steps $k \in [1,N]$ as a Gaussian distribution would be a guaranteed member in that set. 
This is because, Gaussianity is preserved under linear transformations defined by the dynamics. However, the initial state $x_0$ need \emph{not} be Gaussian, and thereby it would be \emph{inappropriate} to assume that the final state $x_N$ would also be Gaussian. 
Hence, the terminal state $x_{N}$ is subject to a similar uncertainty model as the $x_0$, with its distribution $\mathbb{P}_{x_{N}}$ belonging to a moment-based ambiguity set $\calp^{x_N}$, given by 
\begin{equation} \label{eqn_ambig_x_N}
\footnotesize
    \calp^{x_N} = \left\{ \PP_{x_{N}} \mid \bbe[x_N] = \mu_f, \bbe[(x_N-\mu_f) (x_N-\mu_f)^{\top}] = \Sigma_{f} \right\}.
\end{equation}
The objective is to steer the trajectories of \eqref{eqn_dt_stochastic_sys} from $x_0 \sim \PP_{x_{0}} \in \calp^{x_0}$ to $x_N \sim \PP_{x_{N}} \in \calp^{x_N}$ in $N$ time steps. 
This covariance steering objective is usually achieved by minimizing a cost function under specified convex state and input constraints. 
We define the concatenated variables required for the problem formulation as follows
\begin{align*}
    \mathbf{X} &= \begin{bmatrix} x^{\top}_0 & x^{\top}_1 & \dots & x^{\top}_N \end{bmatrix}^{\top}, \\
    \mathbf{U} &= \begin{bmatrix} u^{\top}_0 & u^{\top}_1 & \dots & u^{\top}_{N-1} \end{bmatrix}^{\top}, \\
    \mathbf{W} &= \begin{bmatrix} w^{\top}_0 & w^{\top}_1 & \dots & w^{\top}_{N-1} \end{bmatrix}^{\top}, \\
    E_{k} &= \begin{bmatrix} 0_{n, kn} & I_n & 0_{n, (N-k)n} \end{bmatrix}, \\
    \bar{\mathbf{X}} &= \begin{bmatrix} \mu^{\top}_{0} & \mu^{\top}_{1} & \dots & \mu^{\top}_{N} \end{bmatrix}^{\top}.
\end{align*}
Using the concatenated variables, \eqref{eqn_dt_stochastic_sys} can be written as 
\begin{align} \label{eqn_concat_sys}
    \mathbf{X} = \mathcal{A} x_0 + \mathcal{B} \mathbf{U} + \mathcal{D} \mathbf{W},
\end{align}
where the matrices $\mathcal{A}, \mathcal{B}$ and $\mathcal{D}$ are of appropriate dimensions containing the time varying system matrices $A_k, B_k$, and $D_k$ respectively. See \cite{pilipovsky2021covariance} for more details on this transformation.
It is evident that  $\bbp_{\mathbf{X}}$ is not known exactly but a similar ambiguity set like $\mathcal{P}^{x_{0}}$ can be constructed 
for $\bbp_{\mathbf{X}}$\footnote{Note that $\bbp_{\mathbf{X}}$ is the joint distribution formed with $\bbp_{x_{k}}, k = 0,\dots,N$ being its marginal distributions.}. 
That is,
\begin{align}
\mathcal{P}^{\mathbf{X}} := \left \{ \bbp_{\mathbf{X}} \mid \mathbb{E}[\mathbf{X}] = \bar{\mathbf{X}}, \mathbb{E}[(\mathbf{X} - \bar{\mathbf{X}})(\mathbf{X} - \bar{\mathbf{X}})^{\top}] = \Sigma_{\mathbf{X}} \right\}.
\end{align}
The cost function to optimize is given as follows
\begin{align} \label{eqn_concat_opt_cost}
    J(\mathbf{U}) := \mathbb{E} \left[ \mathbf{X}^{\top} \Bar{Q} \mathbf{X} + \mathbf{U}^{\top} \Bar{R} \mathbf{U} \right], 
\end{align}
where $\Bar{Q} = \mathbf{diag}(Q_0, Q_1, \dots, Q_{N-1})$ is the state penalty matrix and $\Bar{R} = \mathbf{diag}(R_0, R_1, \dots, R_{N-1})$ is the control penalty matrix with each $Q_k \succeq 0, R_{k} \succ 0$ for all $k \in [0, N-1]$.
Over the whole time horizon, we want the states to respect certain state constraints. 
Since the state is stochastic, constraint violation can be imposed through a chance constraint formulation. 
Specifically, a distributionally robust risk constraint formulation is preferred in this setting as the system state need not be Gaussian at any time step $k$.
Hence, we impose the following joint distributionally robust risk constraint that limits the worst case probability defined by $\bbp_{\mathbf{X}}$ 
of state constraint violation to be less than a pre-specified threshold, 
\begin{align} \label{eqn_dr_risk_constraint}
\sup_{\bbp_{\mathbf{X}} \in \mathcal{P}^{\mathbf{X}}} \, \bbp_{\mathbf{X}} \left( \bigwedge_{k=0}^{N} E_{k} \mathbf{X} \notin \mathcal{X}_{p} \right) \leq \Delta, 
\end{align}
where, $\Delta \in (0, 0.5]$ denotes the pre-specified total risk budget and $\mathcal{X}_{p}$ denotes the state constraint set to be satisfied. It is assumed to be a convex polytope and so can be represented using intersection of finite number of half-spaces as
\begin{align}
    \mathcal{X}_{p} := \bigcap^{M}_{i = 1} \left \{ x \in \mathbb{R}^{n} \mid a^{\top}_{i} x \leq b_{i} \right\},
\end{align}
where $a_{i} \in \mathbb{R}^{n}$ and $b_{i} \in \mathbb{R}$. Later in this article, we will extend the case for $\mathcal{X}_{p}$ being a more general convex cone constraint.

\begin{problem} \label{problem_1}
Given the system \eqref{eqn_concat_sys} and a total risk budget $\Delta$, we seek an optimal feedback control policy $\Pi^{\star} =[\pi^{\star}_0,..., \pi^{\star}_{N-1}]$ such that the control inputs $u^{\star}_k = \pi^{\star}_{k}(x_k)$, $k \in [0,N-1]$ steers the system from the $\mathbb{P}_{x_{0}}$ belonging to \eqref{eqn_ambig_x_0} to the $\mathbb{P}_{x_{N}}$ belonging to \eqref{eqn_ambig_x_N} while minimizing the finite-horizon cost function \eqref{eqn_concat_opt_cost} and by respecting the distributionally robust joint risk constraint given in \eqref{eqn_dr_risk_constraint}.
\end{problem}


\section{Covariance Steering with Distributionally Robust Risk Allocation} 
\label{sec_dr_ira}

In this section, we describe how to convert the joint distributionally robust risk constraint into individual distributionally robust risk constraints and use this result to steer both the mean and covariance of the initial state to the desired final mean and covariance. 

\subsection{Propagation of Mean and Covariance}

\noindent We adopt the following control policy from \cite{pilipovsky2021covariance} as follows, 
\begin{align} 
    \mathbf{U} &= \mathbf{V} + \mathbf{K} \mathbf{Y}, \quad \text{where} \label{eqn_control_law} \\
    \mathbf{Y} &= \mathcal{A} \underbrace{(x_0 - \mu_0)}_{:= y_{0}} + \mathcal{D} \mathbf{W}.
\end{align}
Here $\mathbf{V} = \begin{bmatrix} v^{\top}_0 & v^{\top}_1 & \dots & v^{\top}_{N-1} \end{bmatrix}^{\top} \in \mathbb{R}^{Nm}$ and $v_{k} \in \mathbb{R}^{m}$ for all $k = [0,N-1]$. 
The concatenated gain matrix $\mathbf{K} \in \mathbb{R}^{Nm \times (N+1)n}$ contains the individual gain matrices $K_k \in \mathbb{R}^{m \times n}$. The mean and covariance of $\mathbf{Y}$ are given by
\begin{align}
    \Bar{\mathbf{Y}} &= \mathbb{E}[\mathbf{Y}] = \mathbb{E}[\mathcal{A} y_0 + \mathcal{D} \mathbf{W}] = 0, \quad \text{and} \label{eqn_ybar}\\
    \Sigma_{\mathbf{Y}} &= \mathcal{A} \Sigma_{0} \mathcal{A}^{\top} + \mathcal{D} \Sigma_{\mathbf{W}} \mathcal{D}^{\top}, \label{eqn_sigma_y}
\end{align}
where $\Sigma_{\mathbf{W}} = \mathbf{diag}(\Sigma_{w}, \dots, \Sigma_{w}) \in \mathbb{R}^{Nr \times Nr}$. Substituting \eqref{eqn_ybar} and \eqref{eqn_sigma_y} into \eqref{eqn_control_law}, we can infer that 
\begin{align} \label{eqn_control_mean}
    \Bar{\mathbf{U}} &= \mathbb{E}[\mathbf{U}] = \mathbb{E}[\mathbf{V}] + \mathbb{E}[\mathbf{KY}] = \mathbf{V}, \\
    \Sigma_{\mathbf{U}} &= \mathbf{K} \Sigma_{\mathbf{Y}} \mathbf{K^{\top}}.
\end{align}
Similarly, substituting \eqref{eqn_control_law} and  \eqref{eqn_ybar} into \eqref{eqn_concat_sys}, the dynamics of the state mean $\bar{\mathbf{X}} := \mathbb{E}[\mathbf{X}]$, and the state covariance $\Sigma_{\mathbf{X}} = \mathbb{E} \left[ (\mathbf{X} - \bar{\mathbf{X}}) (\mathbf{X} - \bar{\mathbf{X}})^{\top} \right]$ can be written as 
\begin{align}
    \bar{\mathbf{X}} &= \mathcal{A} \mu_0 + \mathcal{B} \mathbf{V}, \label{eqn_mean_prop}\\
    \Sigma_{\mathbf{X}} &= (I + \mathcal{B} \mathbf{K}) \Sigma_{\mathbf{Y}} (I + \mathcal{B} \mathbf{K})^{\top}. \label{eqn_cov_prop}
\end{align}
Note that the initial and the terminal state moments can be expressed as follows
\begin{align}
    \mu_{0} &= E_{0} \bar{\mathbf{X}}, \quad \Sigma_{0} = E_{0} \Sigma_{\mathbf{X}} E_{0}, \quad \text{and} \label{eqn_initial_moments}\\
    \mu_{f} &= E_{N} \bar{\mathbf{X}}, \quad \Sigma_{f} = E_{N} \Sigma_{\mathbf{X}} E_{N}. \label{eqn_terminal_moments}
\end{align}
It is evident from \eqref{eqn_mean_prop} and \eqref{eqn_cov_prop} that the component $\mathbf{V}$ of the control law \eqref{eqn_control_law} steers the mean of the system from $\mu_0$ to $\mu_{f}$ and the component $\mathbf{K}$ of the control law \eqref{eqn_control_law} steers the covariance from $\Sigma_0$ to $\Sigma_{f}$. In order to make the problem convex, we relax the terminal covariance constraint in \eqref{eqn_terminal_moments} as an inequality constraint $\Sigma_{f} \succeq E_{N} \Sigma_{\mathbf{X}} E_{N}$ and subsequently reformulate it as a linear matrix inequality (LMI) using the Schur complement as
\begin{align} \label{eqn_terminal_cov_LMI}
    \begin{bmatrix} \Sigma_{f} & E_{N} (I + \mathcal{B} \mathbf{K}) \Sigma^{\frac{1}{2}}_{\mathbf{Y}} \\
    \Sigma^{\frac{1}{2}}_{\mathbf{Y}} (I + \mathcal{B} \mathbf{K})^{\top} E^{\top}_{N} & I
    \end{bmatrix} \succeq 0.
\end{align}
Note that the cost given by \eqref{eqn_concat_opt_cost} can be decoupled into the cost on the mean and the cost on the covariance as follows
\begin{align}
    J(\mathbf{V}, \mathbf{K}) &= \underbrace{\Bar{\mathbf{X}}^{\top} \Bar{Q} \Bar{\mathbf{X}} + \Bar{\mathbf{U}}^{\top} \Bar{R} \Bar{\mathbf{U}}}_{:= J_{\mu}} + \underbrace{\mathbf{tr}\left( \Bar{Q} \Sigma_{\mathbf{X}} + \Bar{R} \Sigma_{\mathbf{U}} \right)}_{:= J_{\Sigma}}.
\end{align}
Substituting \eqref{eqn_mean_prop} and \eqref{eqn_cov_prop}, we get
\begin{align} \label{eqn_new_cost}
    J(\mathbf{V}, \mathbf{K}) = \, &(\mathcal{A} \mu_0 + \mathcal{B} \mathbf{V})^{\top} \Bar{Q} (\mathcal{A} \mu_0 + \mathcal{B} \mathbf{V}) + V^{\top} \Bar{R} V + \nonumber \\
    &\mathbf{tr}\left[\left( (I + \mathcal{B} \mathbf{K})^{\top} \Bar{Q} (I + \mathcal{B} \mathbf{K}) + \mathbf{K}^{\top} \Bar{R} \mathbf{K} \right) \Sigma_{\mathbf{Y}} \right].
\end{align}

\subsection{Distributionally Robust Polytopic Joint Risk Constraints}

Given that the state constraint set $\mathcal{X}_{p}$ is assumed to be a convex polytope, the worst case joint probability of violating any of the $M$ state constraints over the horizon $N$ given by \eqref{eqn_dr_risk_constraint} can be equivalently written as
\begin{align} 
\sup_{\bbp_{\mathbf{X}} \in \mathcal{P}^{\mathbf{X}}} \, \bbp_{\mathbf{X}} \left( \bigwedge_{k=0}^{N} E_{k} \mathbf{X} \notin \mathcal{X}_{p} \right) \leq \Delta 
\end{align}
or
\begin{align} \label{eqn_dr_risk_constraint_polytope}
\sup_{\bbp_{\mathbf{X}} \in \mathcal{P}^{\mathbf{X}}} \, \bbp_{\mathbf{X}} \left( \bigwedge_{k=1}^{N} \bigwedge_{i=1}^{M} a^{\top}_{i} E_{k} \mathbf{X} > b_{i} \right) \leq \Delta.
\end{align}
Using Boole's inequality, the above joint distributionally robust risk constraint can be decomposed into individual distributionally robust risk constraints at each time step with $\delta_{i,k}$ denoting the individual risk bound\footnote{The first and second subscript in $\delta_{i,k}$ denote the constraint defining the state constraint set $\mathcal{X}_{p}$ and the time step respectively.} representing the worst case probability of violating the $i\ts{th}$ state constraint at time step $k$. 
That is, for each time step $k \in [1,N]$, and for each half-space $i \in [1,M]$ defining the constraint set $\mathcal{X}_{p}$, we have 
\begin{align} \label{eqn_individual_dr_risk_constraintA}
\sup_{\bbp_{x_{k}} \in \mathcal{P}^{x_{k}}} \, \bbp_{x_{k}} \left( a^{\top}_{i} E_{k} \mathbf{X} > b_{i} \right) &\leq \delta_{i,k},
\end{align}
or
\begin{align} \label{eqn_individual_dr_risk_constraint}
\inf_{\bbp_{x_{k}} \in \mathcal{P}^{x_{k}}} \, \bbp_{x_{k}} \left( a^{\top}_{i} E_{k} \mathbf{X} \leq b_{i} \right) &\geq 1 - \delta_{i,k}, \\
\sum_{k=1}^{N} \sum_{i=1}^{M} \delta_{i,k} &\leq \Delta. 
\end{align}

Using Cantelli's inequality, the individual distributionally robust risk constraint in \eqref{eqn_individual_dr_risk_constraint} can be equivalently reformulated as deterministically tightened convex second-order cone constraint on the state mean as described in \cite{dr_cc_lp}. That is, 
\begin{equation} \label{eqn_dr_constraint_second_cone_reform}
a^{\top}_{i} E_{k} \Bar{\mathbf{X}} \leq b_{i} - \underbrace{\sqrt{\frac{1-\delta_{i,k}}{\delta_{i,k}}}}_{:= \mathcal{Q}(1-\delta_{i,k})} \left \| \Sigma^{\frac{1}{2}}_{\mathbf{Y}} (I + \mathcal{B} \mathbf{K})^{\top} E^{\top}_{k} a_{i} \right \Vert_{2},
\end{equation}
where the DR quantile function $\mathcal{Q}(\delta_{i,k}) := \sqrt{\delta_{i,k}/(1 - \delta_{i,k})}$ plays a similar role to that of $\mathbf{\Phi}^{-1}$ corresponding to the Gaussian case. 
%
%
Note that, $\mathcal{Q}(1-\delta_{i,k})$ is also a monotonically increasing function of the risk, just like $\mathbf{\Phi}^{-1}$ as shown in Figure \ref{fig_tighteningcompare} and the deterministic constraint tightening defined using it leads to a stronger tightening than the constant associated with the Gaussian chance constrained tightening. This stronger tightening will ensure that the worst case probability of state constraint violation is satisfied for any arbitrary distribution in the ambiguity set. It is clear from \eqref{eqn_dr_constraint_second_cone_reform} that Problem 1 can now be converted into the following convex programming problem.
\begin{figure}
    \centering
    \includegraphics[scale=0.14]{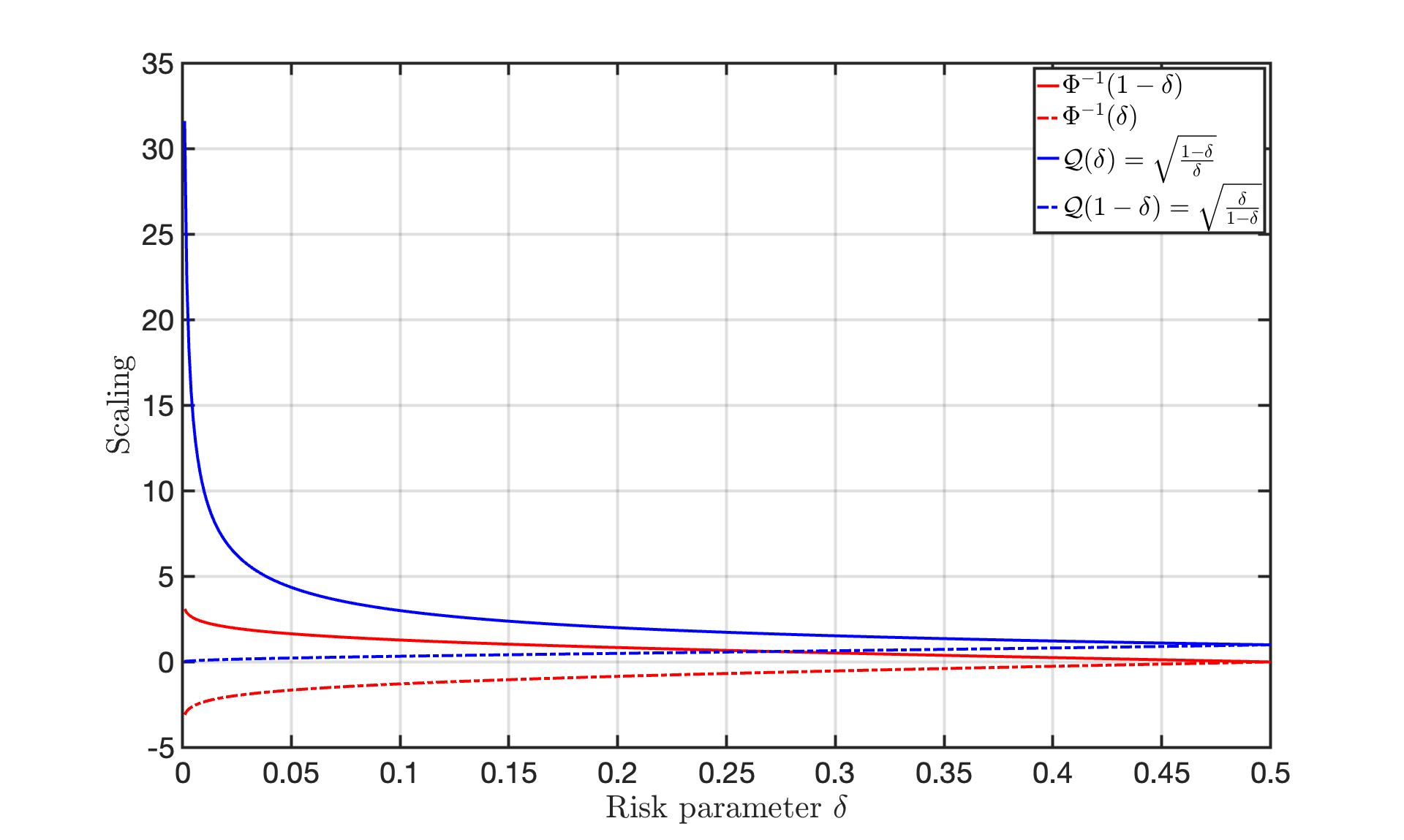}
    \caption{Comparison of tightening constant for the Gaussian case and the distributionally robust case using the Cantelli's inequality are shown here for the risk parameter $\delta \in (0,0.5]$.}
    \label{fig_tighteningcompare}
\end{figure}

\begin{problem}\label{problem_2}
Given the system \eqref{eqn_concat_sys} and a total risk budget $\Delta$, we seek an optimal feedback control sequence of inputs $\mathbf{V}^{\star}, \mathbf{K}^{\star}$ that steers the system from the initial state distribution belonging to \eqref{eqn_ambig_x_0} with moments given by \eqref{eqn_initial_moments} to the final state distribution belonging to \eqref{eqn_ambig_x_N} with moments as in \eqref{eqn_terminal_moments} and \eqref{eqn_terminal_cov_LMI} by minimizing the finite-horizon cost function \eqref{eqn_new_cost} and by respecting the DR risk constraint tightening given in \eqref{eqn_dr_constraint_second_cone_reform}. 
\end{problem}

\subsection{Distributionally Robust Risk Allocation Optimization}

From Theorem~1 in \cite{pilipovsky2021covariance}, the optimal cost obtained from solving Problem~2 will be a monotonically decreasing function of the stage risk budget $\delta_{i,k}$. 
For brevity of notation, we define the vector of all individual risk bounds over the whole time horizon and across all half-spaces defining the state constraint set $\mathcal{X}_{p}$ as $\bm{\delta} = \begin{bmatrix} \delta_{1,1} & \dots & \delta_{M,N} \end{bmatrix}^{\top} \in \mathbb{R}^{MN}$. 
Recall that in the risk allocation problem, the stage risk budget $\delta_{i,k}$ becomes a decision variable along with $\mathbf{K}$. 
However, in the distributionally robust risk constraints given by \eqref{eqn_dr_constraint_second_cone_reform}, $\delta_{i,k}$ and $\mathbf{K}$ occur in a bilinear form. 
A better tractable approach would be to concurrently allocate $\delta_{i,k}$ when solving the optimization Problem 2, so as to minimize the total cost given by \eqref{eqn_new_cost}. 

\subsubsection{Two-Stage Optimization Framework} 

In the following discussion, we elaborate how to optimally allocate the risk across the time steps and across the constraints defining the state constraint set $\mathcal{X}_{p}$. Following \cite{pilipovsky2021covariance}, a two-stage optimization framework is presented here. 
The upper stage optimization finds the optimal risk allocation $\bm{\delta^{\star}}$ and the lower stage solves the covariance steering problem given by Problem~2 for the optimal controller $\mathbf{U^{\star}}$ given the optimal risk allocation $\bm{\delta^{\star}}$ from the upper stage. 

Let the value of the objective function after the lower stage optimization for a given risk allocation $\bm{\delta}$ be 
\begin{align}
    J^{\star}(\bm{\delta}) := \min_{\mathbf{V}, \mathbf{K}} J(\mathbf{V}, \mathbf{K}).
\end{align}
Then the upper-stage optimization problem can then be formulated as follows:
\begin{equation}
\begin{aligned} \label{eqn_upper_stage_dr_ira}
\underset{\bm{\delta}}{\textrm{minimize}} \quad & J^{\star}(\bm{\delta})\\
\textrm{subject to} \quad & \sum_{k=1}^{N} \sum_{i=1}^{M} \delta_{i,k} \leq \Delta, \\
& \delta_{i,k} \geq 0. 
\end{aligned}
\end{equation}
Note that \eqref{eqn_upper_stage_dr_ira} is a convex optimization problem, given that the objective function $J^{\star}(\bm{\delta})$ is convex, and $\Delta \in (0, 0.5]$. Following Theorem 1 in \cite{pilipovsky2021covariance}, the optimal cost can be reduced with each successive iteration by carefully increasing the risk allocations $\delta_{i,k}$. That is, the risk can be lowered by tightening the constraints that are too conservative, and increased by loosening the constraints that are already active. 
It now remains to define active and inactive constraints in the context of distributionally robust risk allocation. Note that the distributionally robust risk constraint given by \eqref{eqn_dr_constraint_second_cone_reform} can be equivalently written as
\begin{equation} \label{eqn_delta_ik}
\footnotesize
    \delta_{i,k} 
    \geq \left(1 + \left(\frac{b_{i} - a^{\top}_{i} E_{k} \Bar{\mathbf{X}}^{\star}} {\left \| \Sigma^{\frac{1}{2}}_{\mathbf{Y}} (I + \mathcal{B} \mathbf{K}^{\star})^{\top} E^{\top}_{k} a_{i} \right \Vert_{2}}\right)^{2} \right)^{-1} =: \bar{\delta}_{i,k}.
\end{equation}
Here the quantity $\bar{\delta}_{i,k}$ represents the true risk experienced by the optimal trajectories, when using $(\mathbf{V}^{\star}, \mathbf{K}^{\star})$. 
Clearly, the selected risk  need not be equal to the actual risk once the optimization is completed. 
When $\delta_{i,k} = \bar{\delta}_{i,k}$, we say that the constraint \eqref{eqn_delta_ik} is active, and is inactive otherwise. Solutions are termed good when the true risk is within a small margin of the allocated risk and the conservative ones correspond to the cases when $\delta_{i,k} > \bar{\delta}_{i,k}$.

\subsubsection{Distributionally Robust Iterative Risk Allocation (DR-IRA) Algorithm}

Starting with some feasible risk allocation $\bm{\delta}^{(j)}$, with $j$ denoting the iteration number, solve Problem \ref{problem_2} to get the optimal controller $(\mathbf{V}^{\star}_{(j)}, \mathbf{K}^{\star}_{(j)})$. 
It is straightforward to observe that using the above optimal controller at iteration $j$ leads to the optimal mean trajectory $\mathbf{\bar{\mathbf{X}}^{\star}_{(j)}}$ respecting the optimal stage risk budget $\delta^{(j)}_{i, k}$. 
%
The risk budget is then successively loosened and tightened according to Algorithm~\ref{alg_drira} as in  \cite{pilipovsky2021covariance}, with the little change in the procedure that we solve the Problem \ref{problem_2} in this paper instead of the Problem 2 mentioned in Algorithm~1 of  \cite{pilipovsky2021covariance} and this new allocation is fed back to the optimizer.
%
This iterative process generates a sequence of risk allocations by continuously lowering the optimal cost. 

\begin{algorithm}
    \caption{The covariance steering algorithm with DR-IRA }\label{alg_drira}
    \begin{algorithmic}
        \STATE \textbf{Input:} $\delta^{(j)}_{k} \gets \Delta / (NM), \epsilon, \rho$
        \STATE \textbf{Output:} $\delta^{\star}, J^{\star}, \mathbf{V}^{\star}, \mathbf{K}^{\star}$
        \WHILE{$ \left | J^{\star} - J^{\star}_{\mathrm{prev}} \right \vert > \epsilon$}
            \STATE $J^{\star}_{\mathrm{prev}} \gets J^{\star}$
            \STATE Solve \textbf{Problem \ref{problem_2}} with current $\delta$ to get $\bar{\delta}$.
            \STATE $\hat{N} \gets$ the number of active constraints
            \IF{$\hat{N} = 0$ or $\hat{N} = MN$}
                \STATE break
            \ENDIF
            \FOR{\textbf{each} $j$\textsuperscript{th} inactive constraint at $k$\textsuperscript{th} time step}
                \STATE $\delta^{(j)}_{k} \gets \rho \delta^{(j)}_{k} + (1-\rho) \bar{\delta}^{(j)}_{k}$
            \ENDFOR
            \STATE $\delta_{\mathrm{res}} \gets \Delta - \sum^{N}_{k=1} \sum^{M}_{j=1} \delta^{(j)}_{k}$
            \FOR{\textbf{each} $j$\textsuperscript{th} active constraint at $k$\textsuperscript{th} time step}
                \STATE $\delta^{(j)}_{k} \gets \delta^{(j)}_{k} + \delta_{\mathrm{res}}/\hat{N}$
            \ENDFOR
        \ENDWHILE
    \end{algorithmic}
\end{algorithm}

\subsection{Distributionally Robust Convex Conic Risk Constraints}
\label{sec:cone_constraints}
Constraints of the convex cone forms are usually known to be more prevalent than the polytopic constraints in engineering applications such as spacecraft rendezvous or landing. In the following discussion, we extend our covariance steering formulation to distributionally robust convex cone constraints. Consider the following convex cone state constraint set 
\begin{align} \label{eqn_cone_constraint}
    \mathcal{X}_{c} := \left\{ x \in \mathbb{R}^{n} \mid \norm{Ax+b}_{2} \leq c^{\top}x+d \right\}.
\end{align}
We can specify the distributionally robust risk constraint for all time steps $k \in [1,N]$ with conic state constraint set $\mathcal{X}_{c}$ as 
\begin{align}
    \sup_{\bbp_{x_{k}} \in \mathcal{P}^{x_{k}}} \, \bbp_{x_{k}} \left[ \norm{Ax_{k}+b}_{2} \leq c^{\top}x_{k} + d \right] \geq 1 - \delta_{k},
\end{align}
or
\begin{align}  
\sup_{\bbp_{x_{k}} \in \mathcal{P}^{x_{k}}} \, \bbp_{x_{k}} \left[x_{k} \in \mathcal{X}_{c}\right] &\geq 1 - \delta_{k}, \label{eqn_cone_risk_constraint}\\
\sum^{N}_{k=1} \delta_{k} &\leq \Delta. \label{eqn_dr_cone_risk_budget}
\end{align}
Notice that \eqref{eqn_cone_risk_constraint} is an infinite dimensional constraint and not necessarily convex~\cite{pilipovsky2021covariance}. 
Hence, we resort to a convex approximation so that \eqref{eqn_cone_risk_constraint} and \eqref{eqn_dr_cone_risk_budget} holds true for all $\Delta \in (0,0.5]$.
We first seek to relax \eqref{eqn_cone_risk_constraint} as DR quadratic risk constraint and then use the reverse union bound approximation.

\subsubsection{Relaxing the DR Conic Risk Constraint}
\begin{lemma} \label{lemma_dr_cone_relax}
Given $\delta_{k} \in (0, 0.5]$ for all  $k \in [1,N]$, the following DR quadratic risk constraint 
\begin{align} \label{eqn_dr_quadratic_risk_constraint_main}
    \sup_{\bbp_{x_{k}} \in \mathcal{P}^{x_{k}}} \, \bbp_{x_{k}} \left[ \norm{Ax_{k}+b}_{2} \leq c^{\top} \bar{x}_{k} + d \right] \geq 1 - \delta_{k},
\end{align}
is a relaxation of the original DR conic risk constraint
\begin{align} \label{eqn_original_dr_quadratic_risk_constraint}
    \sup_{\bbp_{x_{k}} \in \mathcal{P}^{x_{k}}} \, \bbp_{x_{k}} \left[ \norm{Ax_{k}+b}_{2} \leq c^{\top} x_{k} + d \right] \geq 1 - \delta_{k}.
\end{align}
\end{lemma}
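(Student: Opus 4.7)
The plan is to show that any $(\mathbf{V},\mathbf{K},\delta_k)$ feasible for the original DR conic risk constraint \eqref{eqn_original_dr_quadratic_risk_constraint} is also feasible for the relaxed DR quadratic risk constraint \eqref{eqn_dr_quadratic_risk_constraint_main}, which is precisely the definition of ``relaxation'' in the optimization sense: the feasible set of the latter must contain that of the former.

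First I would introduce the shorthand $E_{\mathrm{con}} := \{\|Ax_k+b\|_2 \le c^{\top} x_k + d\}$ for the original conic event and $E_{\mathrm{qua}} := \{\|Ax_k+b\|_2 \le c^{\top} \bar{x}_k + d\}$ for the relaxed quadratic event, and compare the two pointwise under an arbitrary $\bbp_{x_k} \in \calp^{x_k}$. Since the right-hand side of $E_{\mathrm{qua}}$ is deterministic while that of $E_{\mathrm{con}}$ is a random scalar whose expectation is precisely $c^{\top}\bar{x}_k + d$ (by the mean constraint built into $\calp^{x_k}$), the natural partition of the sample space is by the sign of the zero-mean scalar $c^{\top}(x_k - \bar{x}_k)$. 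On $\{c^{\top} x_k \le c^{\top} \bar{x}_k\}$ the conic right-hand side is dominated by the quadratic one, which yields the useful inclusion $E_{\mathrm{con}} \cap \{c^{\top} x_k \le c^{\top} \bar x_k\} \subseteq E_{\mathrm{qua}}$; on the complementary partition the inclusion reverses.

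To conclude, I would combine the two one-sided inclusions using indicator functions and then control the residual partition $\{c^{\top} x_k > c^{\top} \bar{x}_k\}$ via a Cantelli-type bound on the mean-zero scalar $c^{\top}(x_k - \bar{x}_k)$, mirroring the tightening already used to produce \eqref{eqn_dr_constraint_second_cone_reform}. Taking $\sup_{\bbp_{x_k}\in\calp^{x_k}}$ on both sides then propagates the pointwise comparison into the worst-case probability statement and delivers the relaxation. The main obstacle will be the absorption of this residual term: without an extra symmetry hypothesis it is not automatically small, so the argument has to exploit the worst-case nature of the supremum over $\calp^{x_k}$ together with the non-negativity of $\|Ax_k+b\|_2$ (which forces $c^{\top}\bar x_k + d \ge 0$ whenever the relaxed event has positive probability) to close the estimate cleanly.
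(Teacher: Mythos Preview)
Your proposal has a fundamental direction error. You set out to show that every $(\mathbf V,\mathbf K,\delta_k)$ feasible for the \emph{original} conic constraint \eqref{eqn_original_dr_quadratic_risk_constraint} is feasible for the \emph{quadratic} one \eqref{eqn_dr_quadratic_risk_constraint_main}. The paper establishes the reverse implication: it proves
\[
\sup_{\bbp_{x}\in\calp^{x}}\bbp_{x}\bigl[\xi\le\eta\bigr]\;\ge\;\sup_{\bbp_{x}\in\calp^{x}}\bbp_{x}\bigl[\xi\le\bar\eta\bigr],
\]
with $\xi=\norm{Ax+b}_2$, $\eta=c^{\top}x+d$, $\bar\eta=c^{\top}\bar x+d$, and then concludes that whenever the quadratic constraint holds the conic one does as well. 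In other words, despite the word ``relaxation,'' the quadratic constraint is being used as a \emph{sufficient} (safe) surrogate for the conic constraint, not as a superset of its feasible region. Your proof plan therefore targets the wrong inclusion, and the implication you are attempting is in fact false in general: take $\xi$ nearly deterministic at some level $v$, and pick a distribution in $\calp^{x}$ for which $c^{\top}x+d$ exceeds $v$ with probability close to one while its mean $\bar\eta$ sits below $v$; then the conic event has probability near one while the quadratic event has probability zero.

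Even within the direction you chose, your sketch does not close. The partition by the sign of $c^{\top}(x_k-\bar x_k)$ only yields $E_{\mathrm{con}}\cap\{c^{\top}x_k\le c^{\top}\bar x_k\}\subseteq E_{\mathrm{qua}}$, and you correctly flag that the residual on $\{c^{\top}x_k>c^{\top}\bar x_k\}$ cannot be absorbed by Cantelli alone; no amount of taking the supremum over $\calp^{x_k}$ repairs this, since the supremum only makes the left side larger without helping the right. By contrast, the paper's argument writes $\eta=\bar\eta+z\sqrt{c^{\top}\Sigma c}$ with $z$ standardized, passes to the joint density $f_{\xi,z}$, and compares
\[
\int_{-\infty}^{\infty}\!\!\int_{-\infty}^{\bar\eta+y\sqrt{c^{\top}\Sigma c}} f_{\xi,z}(x,y)\,dx\,dy
\quad\text{against}\quad
\int_{-\infty}^{\infty}\!\!\int_{-\infty}^{\bar\eta} f_{\xi,z}(x,y)\,dx\,dy,
\]
thereby arguing the opposite inequality to the one you need. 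To align with the paper you must flip your target implication and argue at the level of the joint law of $(\xi,\eta)$ rather than via the event partition and Cantelli.
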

\begin{proof}
See Appendix~1.
\end{proof}

For each time step $k \in [1,N]$, denote $\psi_{i,k} := a^{\top}_i x_{k} + b_{i}$ and $\kappa_{k} := c^{\top} \bar{x}_{k} + d$ with $a^{\top}_{i}$ denoting the i\textsuperscript{th} row of $A$, and observe that \eqref{eqn_original_dr_quadratic_risk_constraint} can be equivalently written as
\begin{align} 
    \sup_{\bbp_{x_{k}} \in \mathcal{P}^{x_{k}}} \, \bbp_{x_{k}} \left[ \left( \sum^{n}_{i = 1} \psi^{2}_{i,k} \right)^{\frac{1}{2}} \leq \kappa_{k} \right] \geq 1 - \delta_{k},
    \end{align}
    or
    \begin{align} 
\sup_{\bbp_{x_{k}} \in \mathcal{P}^{x_{k}}} \, \bbp_{x_{k}} \left[ \sum^{n}_{i = 1} \psi^{2}_{i,k} \leq \kappa^{2}_{k} \right] \geq 1 - \delta_{k} \label{eqn_dr_quadratic_risk_constraint}
\end{align}

\begin{proposition}
The DR quadratic constraint \eqref{eqn_dr_quadratic_risk_constraint} is satisfied if the following constraints are satisfied (subscript $k$ dropped for brevity of notation) for some non-negative $f_1, \dots, f_n$ and $\beta_1,\dots,\beta_n$:
\begin{subequations}
\begin{align}
    \sup_{\bbp_{x} \in \mathcal{P}^{x}} \, \bbp_{x} \left[ \sum^{n}_{i = 1} \left | \psi_{i} \right \vert \leq f_i \right] &\geq 1 - \beta_i \delta, \quad i = [1,N], \label{eqn_main_dr_quad_constraint}\\
    \sum^{n}_{i=1} f^{2}_{i} &\leq \kappa^{2}, \label{eqn_main_dr_quad_constraint1}\\
    \sum^{n}_{i=1} \beta_{i} &= 1.
\end{align}
\end{subequations}
\end{proposition}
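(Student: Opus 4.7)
The plan is to exhibit an event that (i) is contained in the quadratic sublevel set $\{\sum_i \psi_i^2 \leq \kappa^2\}$ and (ii) has worst-case probability at least $1-\delta$ under the hypotheses, and then conclude by monotonicity of probability. The natural candidate is the box-type event $\bigcap_{i=1}^{n}\{|\psi_i| \leq f_i\}$, which links the componentwise absolute-value constraints in (36a) to the aggregated quadratic constraint via the auxiliary bound (36b).

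The first step is purely deterministic: I would observe that on the event $\bigcap_{i=1}^{n}\{|\psi_i| \leq f_i\}$ we have $\psi_i^2 \leq f_i^2$ for every $i$, so summing and invoking (36b) gives $\sum_{i=1}^{n}\psi_i^2 \leq \sum_{i=1}^{n} f_i^2 \leq \kappa^2$. Hence the set inclusion
\begin{equation*}
\bigcap_{i=1}^{n}\{|\psi_i| \leq f_i\} \;\subseteq\; \Bigl\{\sum_{i=1}^{n}\psi_i^2 \leq \kappa^2\Bigr\}
\end{equation*}
holds pointwise, independently of the underlying measure.

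Next I would bound the probability of the box event from below, uniformly in $\bbp_x \in \mathcal{P}^x$. Taking complements and applying Boole's inequality (the reverse union bound used earlier in the paper to decouple joint risk into individual risks) yields, for any admissible $\bbp_x$,
\begin{equation*}
\bbp_x\!\left[\bigcap_{i=1}^{n}\{|\psi_i|\leq f_i\}\right]
\;\geq\; 1 - \sum_{i=1}^{n}\bbp_x\bigl[|\psi_i| > f_i\bigr].
\end{equation*}
Hypothesis (36a), interpreted in the distributionally robust sense, gives $\bbp_x[|\psi_i| > f_i] \leq \beta_i \delta$ for every $\bbp_x \in \mathcal{P}^x$ and every $i$. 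Summing and invoking $\sum_i \beta_i = 1$ produces the uniform lower bound $1 - \delta$.

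Finally, combining the deterministic inclusion with the probabilistic bound, monotonicity of measure gives $\bbp_x[\sum_i \psi_i^2 \leq \kappa^2] \geq 1-\delta$ for every $\bbp_x \in \mathcal{P}^x$, which is exactly the DR quadratic risk constraint (34). The only real subtlety — the place I expect most care is needed — is keeping the direction of the distributional robustness consistent throughout: each individual constraint (36a) must be read as a worst-case guarantee so that the Boole-type union bound can be applied uniformly over the ambiguity set, rather than only for the individual maximizing measures, which may differ across $i$. Once this is handled correctly, the remainder reduces to the elementary inclusion and union-bound arguments sketched above.
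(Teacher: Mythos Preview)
Your proposal is correct and follows exactly the standard route: the deterministic box inclusion $\bigcap_i\{|\psi_i|\le f_i\}\subseteq\{\sum_i\psi_i^2\le\kappa^2\}$ via \eqref{eqn_main_dr_quad_constraint1}, then Boole's inequality together with \eqref{eqn_main_dr_quad_constraint} and $\sum_i\beta_i=1$ to get the uniform $1-\delta$ lower bound. The paper itself omits the proof, deferring to the same argument in \cite{pilipovsky2021covariance}, so your write-up is precisely what is intended; your closing remark about reading \eqref{eqn_main_dr_quad_constraint} as a worst-case (i.e., $\inf$-type) guarantee so that the union bound applies uniformly over $\mathcal{P}^x$ is the one genuine point of care, and you handle it correctly.
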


\begin{proof}
The proof uses the same arguments as in \cite{pilipovsky2021covariance} and hence is omitted.
\end{proof}

\subsubsection{Approximation Using Reverse Union Bound} 
Note that the constraints given by \eqref{eqn_main_dr_quad_constraint1} can be equivalently written using the mean dynamics given by \eqref{eqn_mean_prop} as
\begin{align}
    \norm{f_{k}}_{2} \leq \kappa_{k}, \quad k = 1, \dots,N. 
 \end{align}
 which holds if and only if
 \begin{align}
  \norm{f_{k}}_{2} \leq c^{\top} E_{k} (\mathcal{A} \mu_0 + \mathcal{B} \mathbf{V}) + d, \quad k = 1, \dots,N. \label{eqn_main_dr_quad_constraint1_relax}
\end{align}

\begin{proposition}
Let $\epsilon^{1}_{i,k}, \epsilon^{2}_{i,k} > 0$ for all $i = 1,\dots,n$ and $k = 1,\dots,N$. 
Assume that the following convex DR SOC constraints hold true for some $\mathbf{V}, \mathbf{K}$, and $\epsilon^{1}_{i,k}+ \epsilon^{2}_{i,k} \geq 2 - \beta_{i} \delta_{k}$.
\begin{subequations}  \label{eqn_rubs}
\begin{align} 
a^{\top}_{i} E_{k} \Bar{\mathbf{X}} + \sqrt{\frac{\epsilon^{1}_{i,k}}{1-\epsilon^{1}_{i,k}}} \left \| \Sigma^{\frac{1}{2}}_{\mathbf{Y}} (I + \mathcal{B} \mathbf{K})^{\top} E^{\top}_{k} a_{i} \right \Vert_{2} \leq f_{i,k} - b_{i}, \label{eqn_rub_1}\\
-a^{\top}_{i} E_{k} \Bar{\mathbf{X}} + \sqrt{\frac{\epsilon^{2}_{i,k}}{1-\epsilon^{2}_{i,k}}} \left \| \Sigma^{\frac{1}{2}}_{\mathbf{Y}} (I + \mathcal{B} \mathbf{K})^{\top} E^{\top}_{k} a_{i} \right \Vert_{2} \leq f_{i,k} + b_{i}. \label{eqn_rub_2}
\end{align}
\end{subequations}
Then, the two-sided DR risk constraint given by \eqref{eqn_main_dr_quad_constraint} holds true as well.
\end{proposition}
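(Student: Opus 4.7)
The plan is to split the two-sided event $\{|\psi_{i,k}|\le f_{i,k}\}$ into an intersection of two one-sided half-space events, reformulate each via the same Cantelli argument that produced \eqref{eqn_dr_constraint_second_cone_reform}, and then recombine via Boole's inequality. The budget hypothesis $\epsilon^{1}_{i,k}+\epsilon^{2}_{i,k}\ge 2-\beta_i\delta_k$ should turn out to be exactly the slack needed so that the union bound returns a $\beta_i\delta_k$-level violation probability.

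Concretely, I would first fix $(i,k)$ and write $\{|\psi_{i,k}|\le f_{i,k}\}=A_{i,k}\cap B_{i,k}$, where $A_{i,k}:=\{a_i^\top x_k\le f_{i,k}-b_i\}$ and $B_{i,k}:=\{-a_i^\top x_k\le f_{i,k}+b_i\}$, since $\psi_{i,k}=a_i^\top x_k + b_i$. Using \eqref{eqn_mean_prop} and \eqref{eqn_cov_prop}, the scalar $a_i^\top x_k$ has mean $a_i^\top E_k\bar{\mathbf X}$ and variance $\|\Sigma_{\mathbf Y}^{1/2}(I+\mathcal B\mathbf K)^\top E_k^\top a_i\|_2^2$ under every $\bbp_{x_k}\in\mathcal P^{x_k}$. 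Re-running the Cantelli-based derivation of \eqref{eqn_dr_constraint_second_cone_reform} for each half-space, and matching the quantile radical $\sqrt{\epsilon/(1-\epsilon)}=\sqrt{(1-\alpha)/\alpha}$ under the substitution $\alpha=1-\epsilon$, one sees that \eqref{eqn_rub_1} is equivalent to
\[
\sup_{\bbp_{x_k}\in\mathcal P^{x_k}}\bbp_{x_k}(A_{i,k}^c)\le 1-\epsilon^{1}_{i,k},
\]
and \eqref{eqn_rub_2} to
\[
\sup_{\bbp_{x_k}\in\mathcal P^{x_k}}\bbp_{x_k}(B_{i,k}^c)\le 1-\epsilon^{2}_{i,k}.
\]

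Boole's inequality applied to $\{|\psi_{i,k}|>f_{i,k}\}=A_{i,k}^c\cup B_{i,k}^c$ then yields, for every $\bbp_{x_k}\in\mathcal P^{x_k}$,
\[
\bbp_{x_k}\bigl(|\psi_{i,k}|>f_{i,k}\bigr)\le \bbp_{x_k}(A_{i,k}^c)+\bbp_{x_k}(B_{i,k}^c)\le 2-\epsilon^{1}_{i,k}-\epsilon^{2}_{i,k}.
\]
Taking a supremum over $\mathcal P^{x_k}$ and invoking $\epsilon^{1}_{i,k}+\epsilon^{2}_{i,k}\ge 2-\beta_i\delta_k$ bounds the right-hand side by $\beta_i\delta_k$, giving the claimed two-sided DR risk statement \eqref{eqn_main_dr_quad_constraint}. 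The only delicate point I anticipate is the bookkeeping in the Cantelli step: one must invert $\sqrt{(1-\alpha)/\alpha}\mapsto\sqrt{\epsilon/(1-\epsilon)}$ cleanly via $\alpha=1-\epsilon$, and keep the sup-on-complement and inf-on-event formulations consistently oriented; once that is done, the union-bound recombination and the final arithmetic on the $\epsilon$'s are mechanical.
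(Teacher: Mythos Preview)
Your proposal is correct and follows essentially the same route as the paper's proof: the paper rewrites $\{|\psi_{i,k}|\le f_{i,k}\}$ as the intersection of the two half-space events, applies the ``reverse union bound'' $\bbp(A\cap B)\ge \bbp(A)+\bbp(B)-1$ (which is exactly Boole on the complements, as you use it), decouples into the two one-sided DR constraints with thresholds $\epsilon^1_{i,k},\epsilon^2_{i,k}$, and invokes Cantelli's inequality together with \eqref{eqn_mean_prop}--\eqref{eqn_cov_prop} to obtain \eqref{eqn_rub_1}--\eqref{eqn_rub_2}. Your bookkeeping on the quantile substitution $\alpha=1-\epsilon$ and the sup/inf orientation is handled slightly more carefully than in the paper, but the argument is the same.
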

\begin{proof}
See Appendix~2.
\end{proof}
It is important to fix $\epsilon^{1}_{i,k}$, and $\epsilon^{2}_{i,k}$ apriori to avoid bilinearity in the decision variables of constraints \eqref{eqn_rub_1} and \eqref{eqn_rub_2} respectively. Thus, the relaxed DR cone risk constraints given by \eqref{eqn_dr_quadratic_risk_constraint_main} was approximated using \eqref{eqn_main_dr_quad_constraint1_relax} and the alternate approximations \eqref{eqn_rub_1} and \eqref{eqn_rub_2} of the two-sided DR risk constraint \eqref{eqn_main_dr_quad_constraint}. Note that the constraints \eqref{eqn_main_dr_quad_constraint1_relax}, \eqref{eqn_rub_1} and \eqref{eqn_rub_2} are convex and can be solved using the standard SDP solvers used similarly for the polyhedral counterparts. 
In terms of computational complexity, notice that there are $2n+1$ SOC constraints per time step and thus $(2n+1)N$ SOC constraints in total.


\section{Numerical Simulations} 
\label{sec_num_sim}

In this section, we demonstrate the proposed approach using two examples. 
One dealing with spacecraft proximity operation as in \cite{pilipovsky2021covariance}, and the second one using a simple double integrator based path planning as in \cite{okamoto2018optimal}. The code is available at \url{https://github.com/venkatramanrenganathan/Covariance-Steering-With-Optimal-DR-Risk-Allocation}.
%
\subsection{Double Integrator Path Planning}

\subsubsection{Simulation Setup}

We consider a path-planning problem for a vehicle modelled using the following time invariant and stochastic double integrator system dynamics:
\begin{align*}
    A = \begin{bmatrix} I_{2} & \Delta t I_{2} \\ 0_{2 \times 2} & I_{2} \end{bmatrix}, \quad B = \begin{bmatrix} (\Delta t)^{2} I_{2} \\ \Delta t I_{2} \end{bmatrix},\quad D = 10^{-3} I_{4}.
\end{align*}
We assume $\mu_{0} = \begin{bmatrix} -10 & 1 & 0 & 0 \end{bmatrix}$ and $\Sigma_{0} = \texttt{diag}(0.1, 0.1, 0.01, 0.01)$ and the discretization time step to be $\Delta t = 0.2$, with the horizon $N = 15$. 
We wish to steer the distribution from the above initial state to the final mean $\mu_{f} = 0$ with final covariance $\Sigma_{f} = 0.25\, \Sigma_{0}$, while minimizing the cost function with penalty matrices $Q = \texttt{diag}(10, 10, 1, 1)$ and $R = 10^{3} I_{2}$. 
The state constraints are defined as $0.2(x-1) \leq y \leq -0.2(x-1)$. We impose the joint probability of failure over the whole horizon to be $\Delta = 0.10$, which implies that the worst case probability of violating any state constraint over the whole horizon is less than $10\%$. 
For the Monte Carlo trials, the disturbances were sampled from a multivariate Laplacian distribution with zero mean and unit covariance.
Similarly, the initial state $x_{0}$ was sampled from a multivariate Laplacian distribution with mean $\mu_{0}$ and covariance $\Sigma_{0}$. 
For comparison, simulations were performed with both Gaussian and distributionally-robust chance constraints.

\begin{figure*}
\centering
\begin{subfigure}[b]{0.7\textwidth}
   \includegraphics[width=\linewidth]{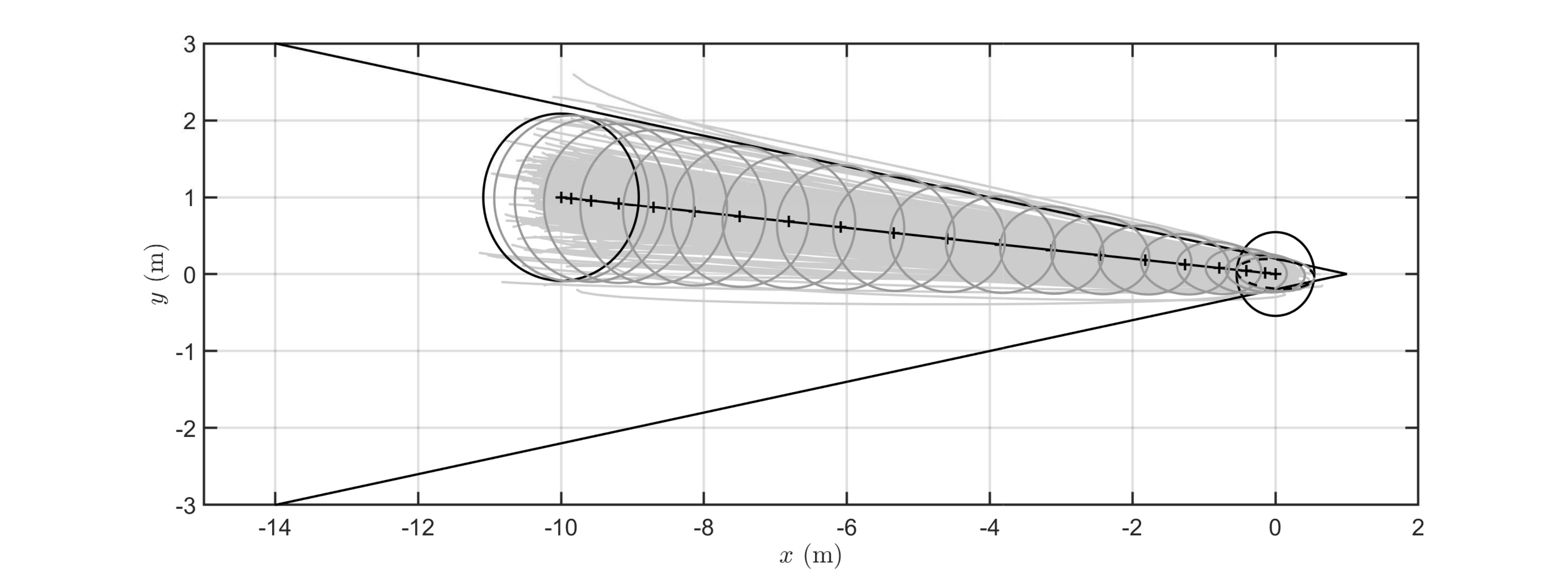}
   \caption{Gaussian chance constraints.}
   \label{fig:double_int_trajs_gaussian} 
\end{subfigure}

\begin{subfigure}[b]{0.7\textwidth}
   \includegraphics[width=\linewidth]{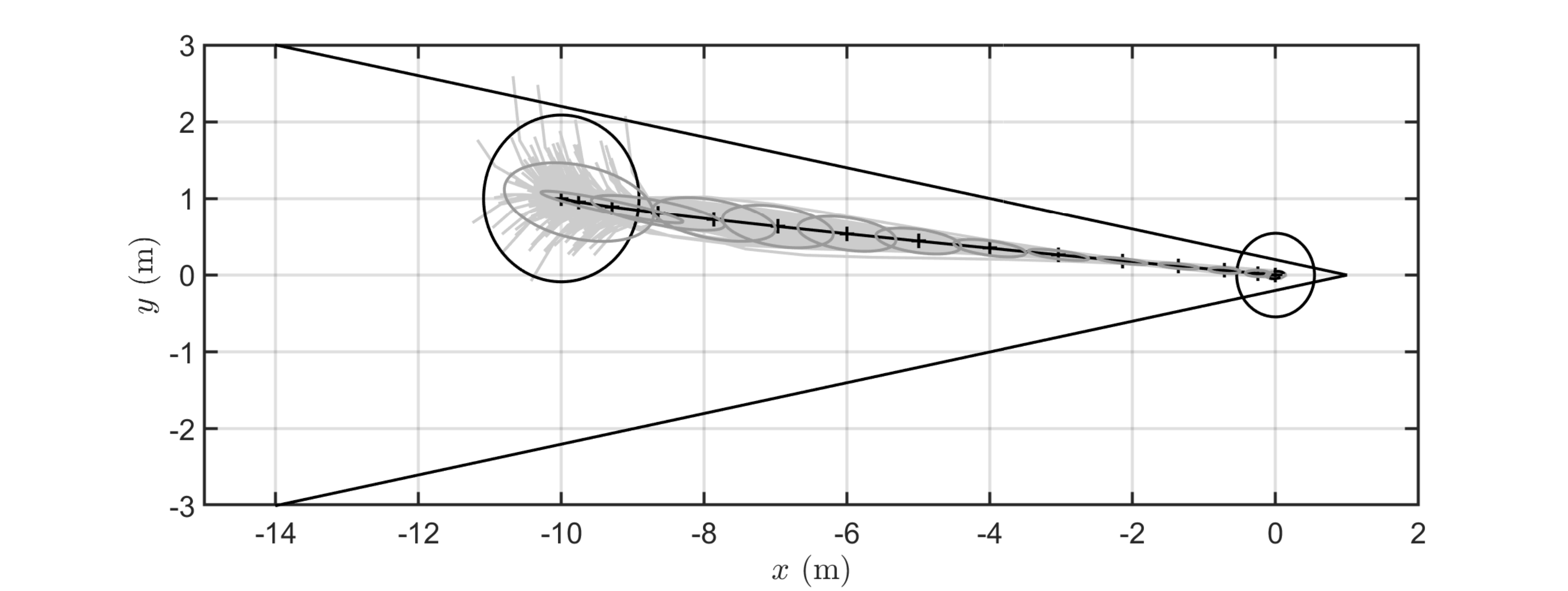}
   \caption{Distributionally-robust chance constraints.}
   \label{fig:double_int_trajs_DR}
\end{subfigure}
\caption{Comparison of 100 independent Monte Carlo trajectories using Gaussian (a) and DR (b) chance constraints with $x_{0}$ and noise history $\{w_k\}$ sampled from multivariate Laplacian distributions.}
\label{fig:double_int_trajs}
\end{figure*}

\begin{figure*}
\centering
\begin{minipage}[b]{.45\textwidth}
    \centering
    \includegraphics[scale=0.4]{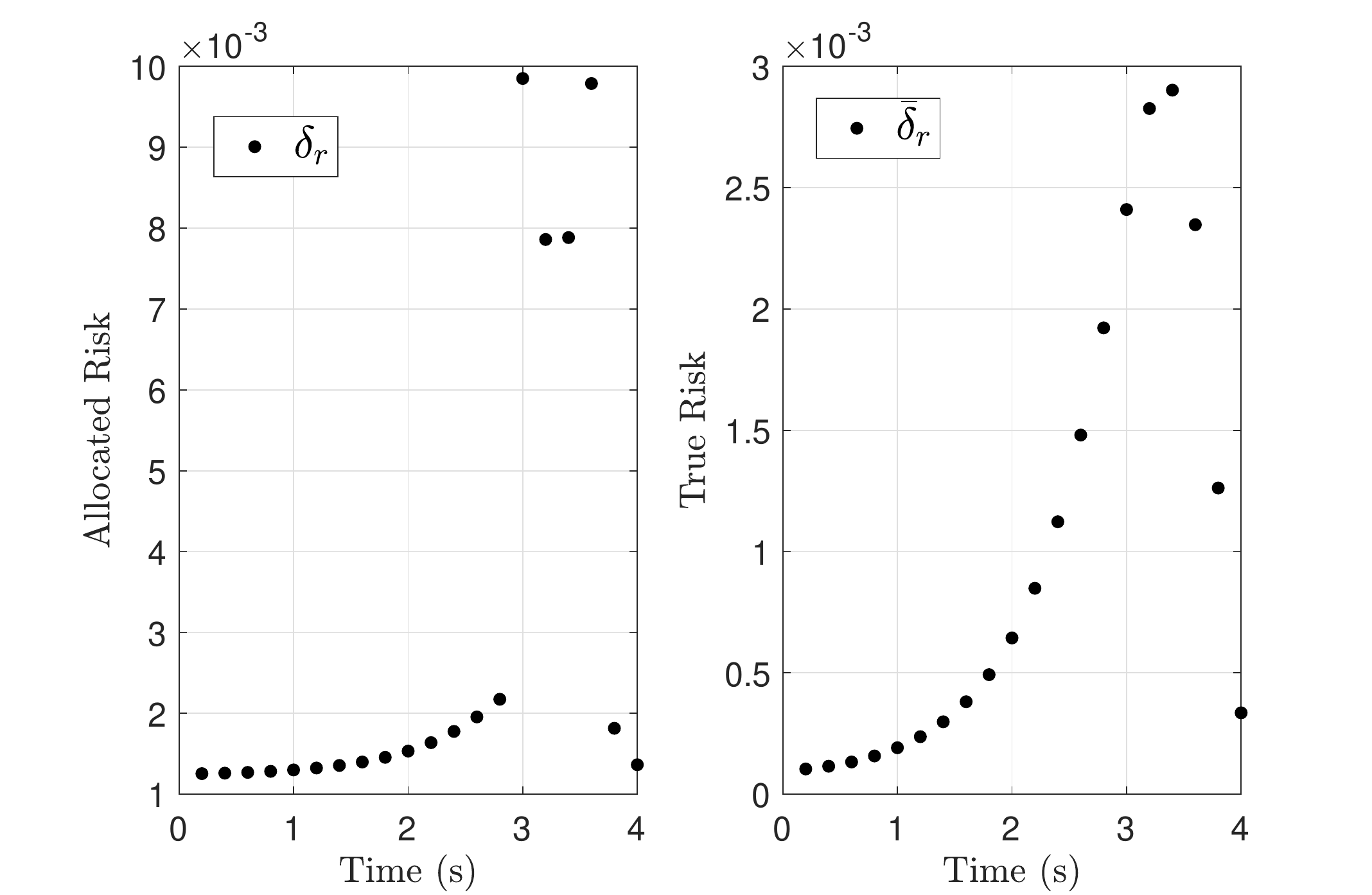}
    \caption{Allocated and true risks with Gaussian chance constraints.}
    \label{fig:CC_Risk_Allocate}
\end{minipage}
\hspace{1cm}
\begin{minipage}[b]{.45\textwidth}
    \centering
    \includegraphics[scale=0.37]{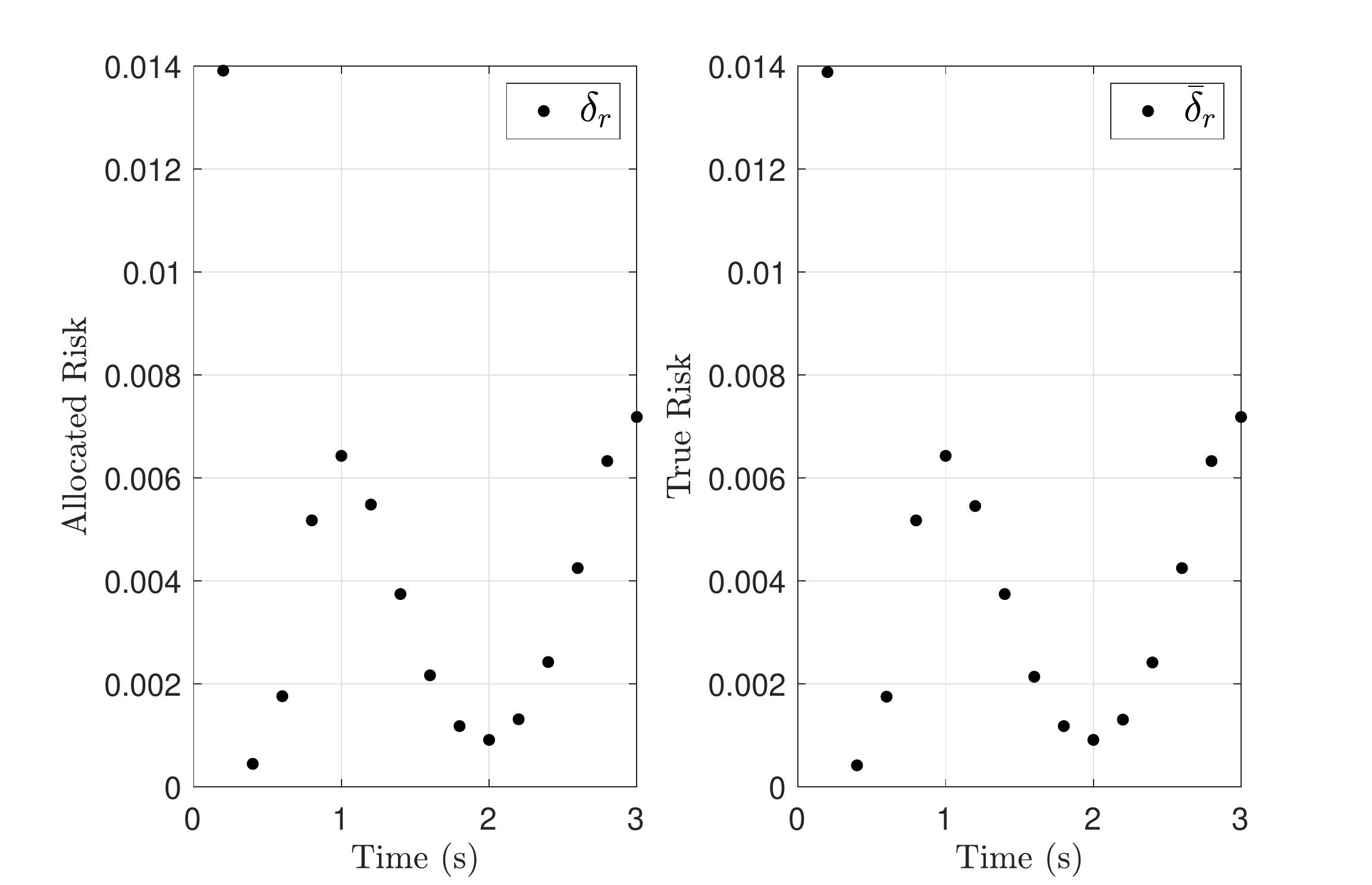}
    \caption{Allocated and true risks with distributionally-robust chance constraints.}
\label{fig:DR_Risk_Allocate}
\end{minipage}
\end{figure*}

\subsubsection{Results \& Discussion}

The results from 500 independent Monte-Carlo trials are shown in Figure~\ref{fig:double_int_trajs}. 
Given that the disturbances were sampled from a multivariate Laplacian distribution, it is evident from Figure~\ref{fig:double_int_trajs_gaussian} that some Monte-Carlo trials resulted in a violation of the state constraints as the covariance steering was performed assuming Gaussian disturbances. 
On the other hand, the distributionally robust risk-constrained covariance steering (Figure~\ref{fig:double_int_trajs_DR}) ensured that the total risk budget is respected, despite being more conservative.
Moreover, all probabilistic state constraints were satisfied. 
This shows that, assuming Gaussian chance constraints might potentially lead to severe miscalculation of the risk.
It is evident from Figures~\ref{fig:CC_Risk_Allocate} and \ref{fig:DR_Risk_Allocate} that the true risk is always upper bounded by the allocated risk regardless of whether a Gaussian or a DR iterative risk allocation is employed. 

\subsection{Spacecraft Proximity Operation}

\subsubsection{Simulation Setup}

We demonstrate the proposed theory using the spacecraft proximity operations problems in orbit as described in \cite{pilipovsky2021covariance} albeit with certain parameter changes. 
We perform three set of simulation here, namely: 
a) covariance steering with distributionally robust polytopic state risk constraints; 
b) covariance steering with distributionally robust iterative risk allocation for polytopic state risk constraints; 
and c) covariance steering with distributionally robust iterative risk allocation for convex conic state risk constraints using reverse union bound approximation. For the case of polytopic state risk constraints, we assume $\mu_{0} = \begin{bmatrix} 100 & -120 & 90 & 0 & 0 & 0 \end{bmatrix}$ and $\Sigma_{0} = 0.4 \texttt{diag}(1, 1, 1, 0.1, 0.1, 0.1)$. We wish to steer the distribution from the above initial state to the final mean $\mu_{f} = 0$ with final covariance $\Sigma_{f} = 0.5\, \Sigma_{0}$, while minimizing the cost function with penalty matrices $Q = \texttt{diag}(10, 10, 10, 1, 1, 1)$ and $R = 10^{3} I_{3}$. 
We impose the joint probability of failure over the whole horizon to be $\Delta = 0.15$. 
As in the previous example, the disturbances were sampled from a multivariate Laplacian distribution with zero mean and unit covariance. Similarly, the initial state $x_{0}$ was sampled from a multivariate Laplacian distribution with mean $\mu_{0}$ and covariance $\Sigma_{0}$.
For the cone constraints, we shift the initial $x$ mean to $\mu_0^{(1)} = 10$.

\subsubsection{Results \& Discussion}

\begin{figure*}
\centering
\begin{subfigure}[b]{.3\textwidth}
    \centering
    \hspace{-1cm}
    \includegraphics[scale=0.22]{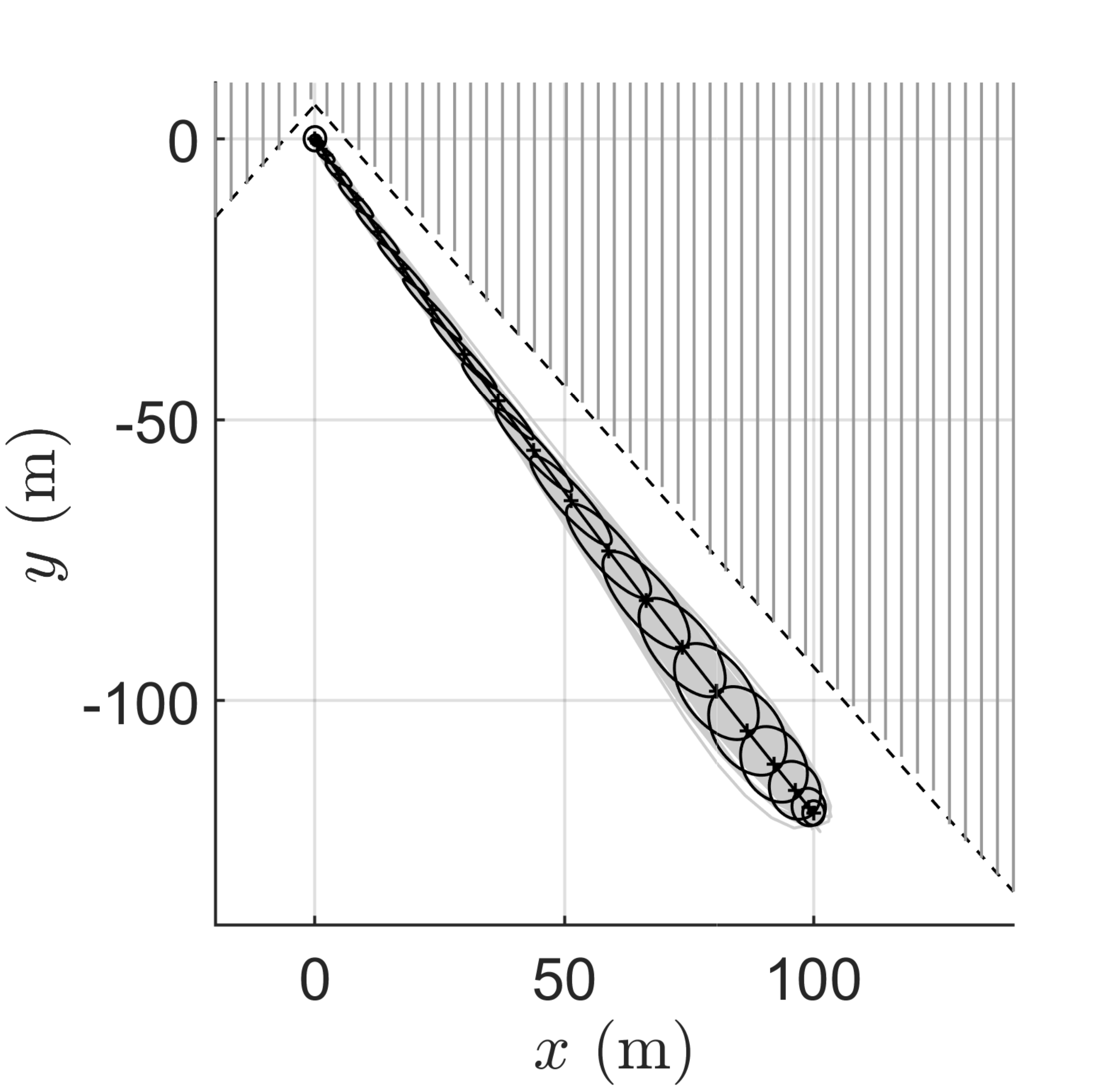}
    \caption{DR chance constraint solution with optimal risk allocation.}
    \label{fig:DR_SC_optimal}
\end{subfigure}
\hspace{2pt}
\begin{subfigure}[b]{.3\textwidth}
    \centering
    \hspace{-.8cm}
    \includegraphics[scale=0.31]{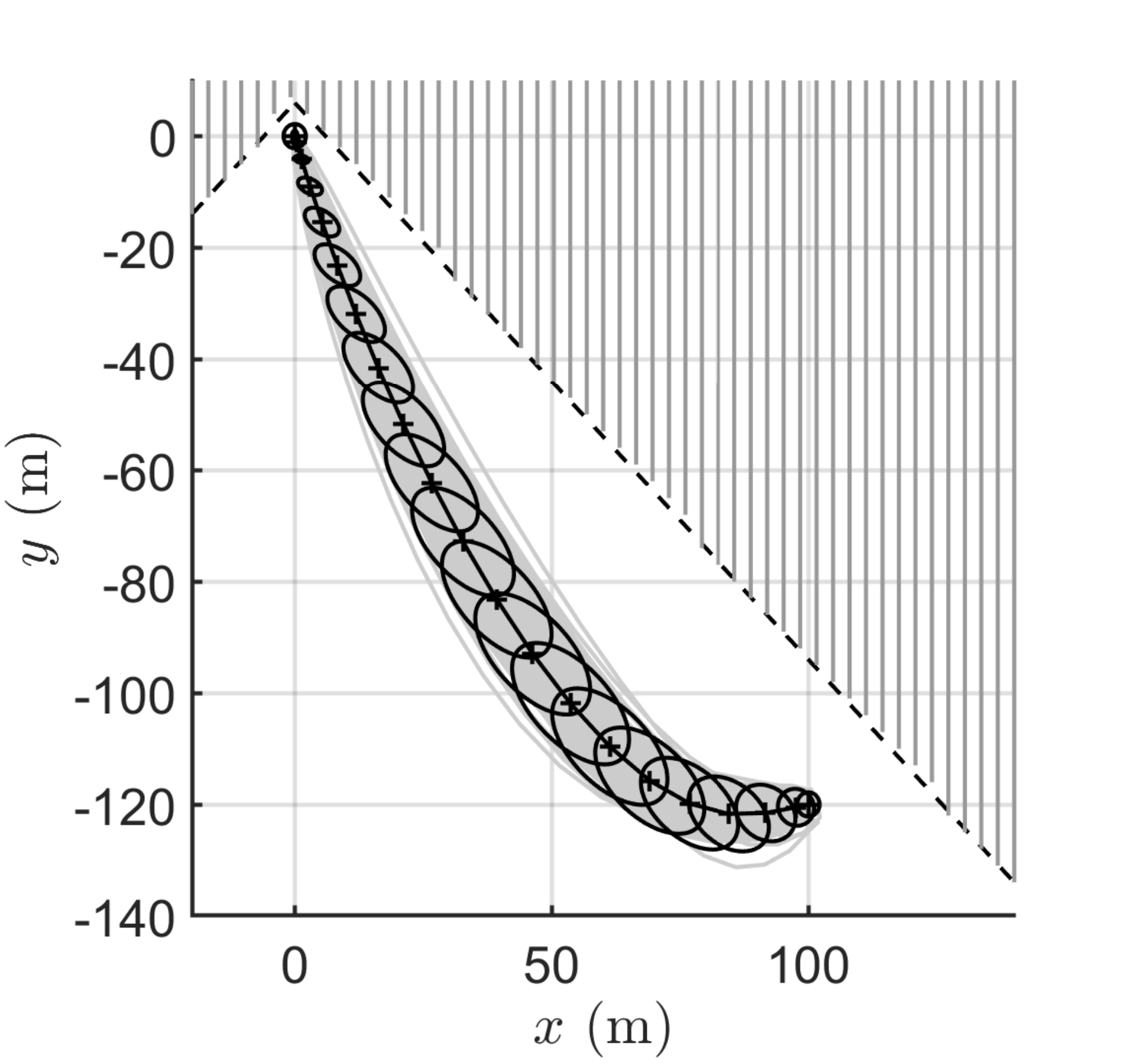}
    \caption{DR chance constraint solution with uniform risk allocation.}
    \label{fig:DR_SC_uniform}
\end{subfigure}
\hspace{2pt}
\begin{subfigure}[b]{.3\textwidth}
    \centering
    \hspace{-.5cm}
    \includegraphics[scale=0.28]{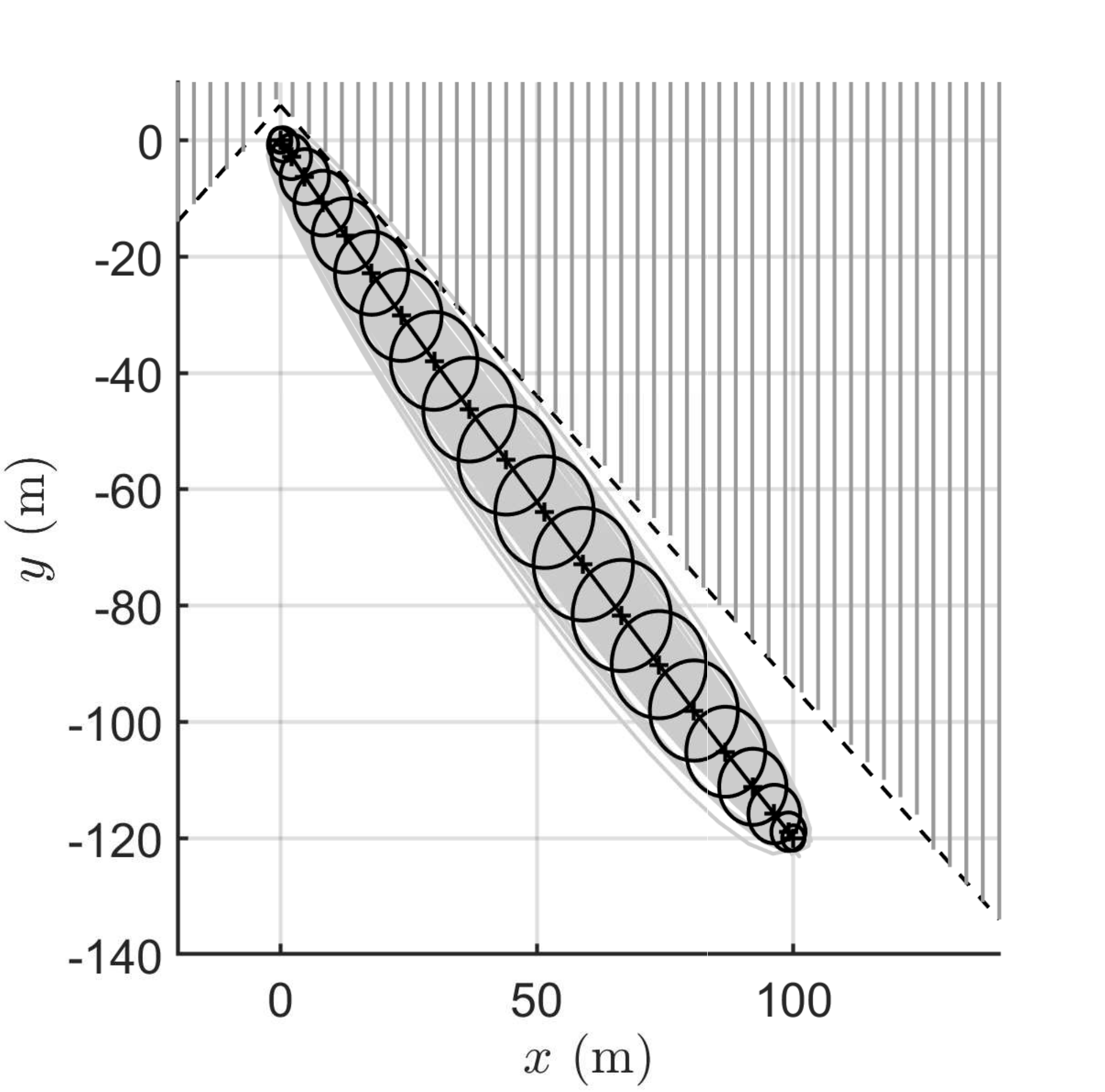}
    \caption{Gaussian chance constraint solution with optimal risk allocation.}
    \label{fig:Gaussian_SC_optimal}
\end{subfigure}
\caption{Trajectories of 500 independent Monte Carlo samples for distributionally-robust and Gaussian polytopic chance constraints with $3\sigma$ covariance ellipses. Each subplot displays the projection of the full state onto the $x-y$ plane.}
\label{fig:SC_trajs}
\end{figure*}

The Monte Carlo results of DR covariance steering are shown in Figure~\ref{fig:SC_trajs}, which show the $x-y$ projection of the state trajectories and 3$\sigma$ covariance ellipses for three different risk solutions.
Notice that the Gaussian solution in Figure~\ref{fig:Gaussian_SC_optimal} remains too close to the boundary of the state space, which implies that under non-Gaussian noises, a CS controller based on Gaussian risk constraints leads to a significant miscalculation of risk.
On the other hand, the DR solution in Figure~\ref{fig:DR_SC_uniform} steers to the middle of the space to ensure proper constraint satisfaction; but this too is under suboptimal risk placement, from which Figure~\ref{fig:DR_SC_optimal} arises as the optimal trajectories corresponding to the optimal risk budget.
Figure~\ref{fig:optimal_costs} shows the monotonically decreasing costs with each iteration of the IRA scheme, which implies the optimal risk budget has resulted in the lowest possible cost.

\begin{figure}[!htb]
    \centering
    \includegraphics[width=\linewidth]{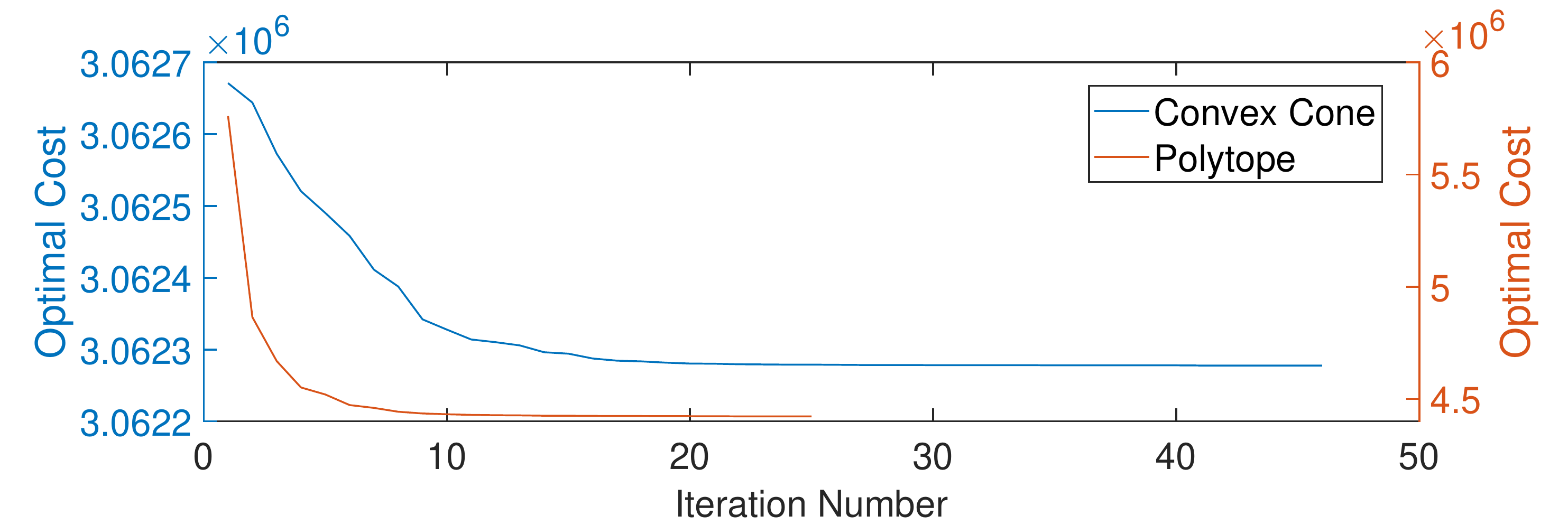}
    \caption{Optimal cost for each DR-IRA algorithm iteration for polytope and cone chance constraints.}
    \label{fig:optimal_costs}
\end{figure}

\begin{figure}[!htb]
    \centering
    \includegraphics[width=\linewidth]{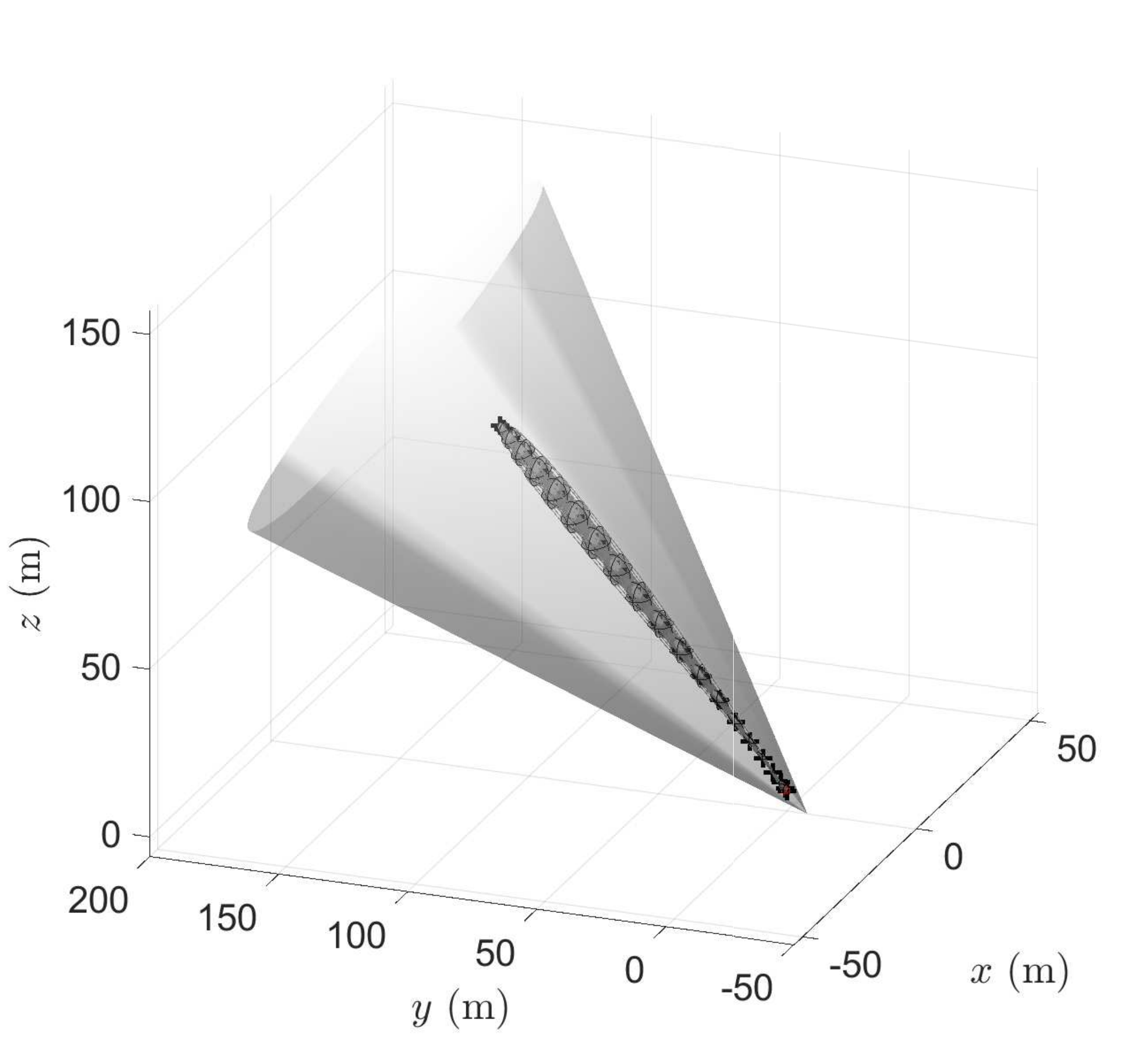}
    \caption{Optimal trajectories for 500 independent Monte Carlo trials under DR cone chance constraints.}
    \label{fig:trajs_cone}
\end{figure}

The simulation results of covariance steering with distributionally robust iterative risk allocation for convex conic state risk constraints using reverse union bound approximation are shown in Figure~\ref{fig:trajs_cone}, with the cost versus the iteration trade-off shown in Figure~\ref{fig:optimal_costs}.
The state trajectories and their 3$\sigma$ dispersions remain well within the cone at all time steps and are robust to any zero mean, unit covariance disturbance.

\section{Conclusion and Future Directions}
\label{sec_conclusion}

In this article we have incorporated an DR-IRA strategy to optimize the worst case probability of violating the state constraints at every time step within the CS problem of a linear stochastic system subject to distributionally robust risk constraints. 
The use of DR-IRA in the context of CS with distributionally robust risk constraints results in optimal solutions that have a true risk much closer to the intended design requirements, compared to the use of a uniform risk allocation. 
We also extended the approach to quadratic chance constraints in the form of convex cones.
Future work will seek to solve the CS problem with output feedback and also extend the problem setting to handle nonlinear dynamics. Further, the general problem of moment steering involving the first $k \in \mathcal{N}$ moments is interesting to be investigated as well.

\noindent \textbf{Acknowledgment:}
The work of the first author has been supported by the European Research Council (ERC) under the European Union’s Horizon 2020 research and innovation program under grant agreement No 834142 (Scalable Control). The work of the second and third author has been supported by NASA University Leadership Initiative award 80NSSC20M0163 and ONR award N00014-18-1-2828.



\bibliographystyle{IEEEtran}
\bibliography{bibliograph}

\section*{Appendix}

\section*{A.~Proof of Lemma 1}
\setcounter{equation}{0}
\renewcommand{\theequation}{A.\arabic{equation}}

We drop the subscript $k$ denoting the time index for convenience in the following proof. Let the original DR conic risk constraint be given by
\begin{align*} 
    \sup_{\bbp_{x} \in \mathcal{P}^{x}} \, \bbp_{x} \left[ \underbrace{\norm{Ax+b}_{2}}_{:= \xi} \leq \underbrace{c^{\top} x + d}_{:= \eta} \right] \geq 1 - \delta.
\end{align*} 
Note that the state $x$ is a random vector modeled using moment based ambiguity set. That is, $x \sim \mathbb{P}_{x} \in \mathcal{P}^{x}$ where,
\begin{align*}
    \mathcal{P}^{x} := \left \{ \mathbb{P}_{x} \mid \mathbb{E}[x] = \mu, \mathbb{E}[(x-\mu)(x-\mu)^{\top}] = \Sigma \right \}.
\end{align*}
Note that $\xi$ follows some unknown distribution with probability density function $f_{\xi}(x)$. 
On the other hand, $\eta$ can be modeled using a moment-based ambiguity set similar to that of $x$. 
That is, $\eta \sim \mathbb{P}_{\eta} \in \mathcal{P}^{\eta}$ where,
\begin{equation*} \label{eqn_eta_ambig_set}
    \mathcal{P}^{\eta} := \left \{ \mathbb{P}_{\eta} \mid \mathbb{E}[\eta] = \underbrace{c^{\top}\mu + d}_{:= \Bar{\eta}}, \mathbb{E}[(\eta-\Bar{\eta})(\eta-\Bar{\eta})^{\top}] = c^{\top} \Sigma c \right \}.
\end{equation*}
Let us define a standard scalar random variable $z$ using moment based ambiguity set. That is, $z \sim \mathbb{P}_{z} \in \mathcal{P}^{z}$ where,
\begin{align*}
    \mathcal{P}^{z} := \left \{ \mathbb{P}_{z} \mid \mathbb{E}[z] = 0, \mathbb{E}[zz^{\top}] = 1 \right \}.
\end{align*}
Then, using $z$ we can rewrite the random variable $\eta$ as follows,
\begin{align*}
    \eta = \Bar{\eta} + z \sqrt{c^{\top} \Sigma c}.
\end{align*}
Let $f_{\xi, \eta}(\cdot, \cdot)$ denote the joint probability density function of the random variables $\xi, \eta$. Then,
\begin{align} \label{eqn_joint_pdf}
    \sup_{\bbp_{x} \in \mathcal{P}^{x}} \, \bbp_{x} \left[ \xi \leq \eta \right] &= \int^{\infty}_{-\infty} \int^{y}_{-\infty} f_{\xi, \eta}(x, y) \, dx \, dy.
\end{align}
The innermost integral of \eqref{eqn_joint_pdf} can be written as
\begin{align*}
    \int^{y}_{-\infty} f_{\xi, \eta}(x, y) \, dx &= \int^{\Bar{\eta} + y \sqrt{c^{\top} \Sigma c}}_{-\infty} f_{\xi, z}(x, y) \, dx. \\
    \implies \sup_{\bbp_{x} \in \mathcal{P}^{x}} \, \bbp_{x} \left[ \xi \leq \eta \right] &= \int^{\infty}_{-\infty} \int^{y}_{-\infty} f_{\xi, \eta}(x, y) \, dx \, dy \\
    &= \int^{\infty}_{-\infty} \int^{\Bar{\eta} + y \sqrt{c^{\top} \Sigma c}}_{-\infty} f_{\xi, z}(x, y) \, dx \, dy \\
    &\geq \int^{\infty}_{-\infty} \int^{\Bar{\eta}}_{-\infty} f_{\xi, z}(x, y) \, dx \, dy \\
    &= \int^{\Bar{\eta}}_{-\infty} \underbrace{\int^{\infty}_{-\infty} f_{\xi, z}(x, y) \, dy}_{:= f_{\xi}(x)} \, dx \\
    &=: \sup_{\bbp_{x} \in \mathcal{P}^{x}} \, \bbp_{x} \left[ \xi \leq \Bar{\eta} \right]
\end{align*}
Hence, the above DR quadratic risk constraint is a relaxation of the original DR conic risk constraint. 
It follows that if, for all $\delta \in (0,0.5]$,
\begin{align*}
    \sup_{\bbp_{x} \in \mathcal{P}^{x}} \, \bbp_{x} \left[ \xi \leq \Bar{\eta} \right] \geq 1 - \delta,
    \end{align*}
    then
\begin{align*}
    \sup_{\bbp_{x} \in \mathcal{P}^{x}} \, \bbp_{x} \left[ \xi \leq \eta \right] \geq \sup_{\bbp_{x} \in \mathcal{P}^{x}} \, \bbp_{x} \left[ \xi \leq \Bar{\eta} \right] \geq 1 - \delta,
\end{align*}
and the result follows.


\section*{B.~Proof of Proposition 2}
\setcounter{equation}{0}
\renewcommand{\theequation}{B.\arabic{equation}}
For each $i = 1,\dots,n$ at time $k$, we rewrite \eqref{eqn_main_dr_quad_constraint} as
\begin{align} \label{eqn_prop2_proof_1}
    \sup_{\bbp_{x_{k}} \in \mathcal{P}^{x_{k}}} \, \bbp_{x_{k}} \left[ -f_{i,k} - b_{i} \leq a^{\top}_{i} x_{k} \leq f_{i,k} - b_{i} \right] &\geq 1 - \beta_i \delta_{k}.
\end{align}
Define the set
\begin{align}
\Omega_{k} := \left\{ x_{k} \mid a^{\top}_{i} x_{k} \leq f_{i,k} - b_{i} \wedge a^{\top}_{i} x_{k} \geq -f_{i,k} - b_{i} \right\}.    
\end{align}
Then, \eqref{eqn_prop2_proof_1} can be equivalently written as
\begin{align}
    \sup_{\bbp_{x_{k}} \in \mathcal{P}^{x_{k}}} \, \bbp_{x_{k}} \left[ x_{k} \in \Omega_{k} \right] &\geq 1 - \beta_i \delta_{k}.
\end{align}
Applying the reverse union bound \cite{hunter_book} to events in $\Omega_{k}$, we get
\begin{align}
    \sup_{\bbp_{x} \in \mathcal{P}^{x}} \, \bbp_{x} \left[ x_{k} \in \Omega_{k} \right] &\geq \sup_{\bbp_{x} \in \mathcal{P}^{x}} \, \bbp_{x} \left[ a^{\top}_{i} x_{k} \leq f_{i,k} - b_{i} \right] \nonumber \\ 
    &+ \sup_{\bbp_{x} \in \mathcal{P}^{x}} \, \bbp_{x} \left[ a^{\top}_{i} x_{k} \geq -f_{i,k} - b_{i} \right] - 1.
\end{align}
Hence, if the constraint
\begin{align} \label{eqn_prop2_proof_2}
    &\sup_{\bbp_{x} \in \mathcal{P}^{x}} \, \bbp_{x} \left[ a^{\top}_{i} x_{k} \leq f_{i,k} - b_{i} \right] \nonumber \\
    &+ \sup_{\bbp_{x} \in \mathcal{P}^{x}} \, \bbp_{x} \left[ a^{\top}_{i} x_{k} \geq -f_{i,k} - b_{i} \right] \geq 2 - \beta_i \delta_{k}
\end{align}
holds, then \eqref{eqn_prop2_proof_1} is satisfied and hence \eqref{eqn_main_dr_quad_constraint} is also satisfied.
Note that \eqref{eqn_prop2_proof_2} is equivalent to the following decoupled constraints
\begin{subequations}
\begin{align} 
    \sup_{\bbp_{x} \in \mathcal{P}^{x}} \, \bbp_{x} \left[ a^{\top}_{i} x_{k} \leq f_{i,k} - b_{i} \right] &\geq \epsilon^{1}_{i,k}, \label{eqn_prop2_proof_3} \\
    \sup_{\bbp_{x} \in \mathcal{P}^{x}} \, \bbp_{x} \left[ a^{\top}_{i} x_{k} \geq -f_{i,k} - b_{i} \right] &\geq \epsilon^{2}_{i,k}, \label{eqn_prop2_proof_4} \\ 
    \epsilon^{1}_{i,k} + \epsilon^{2}_{i,k} &\geq 2 - \beta_i \delta_{k}.
\end{align}
\end{subequations}
Note that $z_{k} := a^{\top}_{i} x_{k}$ is a random variable with mean $\bar{z}_{k} := a^{\top}_{i} \bar{x}_{k}$ and covariance $\Sigma_{z_{k}} := a^{\top}_{i} E_{k} \Sigma_{X} E^{\top}_{k} a_{i}$ and the unknown true distribution of $z_{k}$, namely $\bbp_{z_{k}}$ lies in the moment based ambiguity set $\mathcal{P}^{z_{k}}$ given by
\begin{align*}
    \mathcal{P}^{z_{k}} := \left \{ \mathbb{P}_{z_{k}} \mid \mathbb{E}[z_{k}] = \bar{z}_{k}, \mathbb{E}[(z_{k}-\bar{z}_{k})(z_{k}-\bar{z}_{k})^{\top}] = \Sigma_{z_{k}} \right \}.
\end{align*}
Using Cantelli's inequality on the set of distributions defined using $\mathcal{P}^{z_{k}}$, the required probabilities given by \eqref{eqn_prop2_proof_3} and \eqref{eqn_prop2_proof_4} can be expressed in terms of the DR quantile function $\mathcal{Q}(\cdot)$ as follows
\begin{align}
    \frac{f_{i,k} - b_{i} - a^{\top}_{i} \bar{x}_{k}}{\sqrt{\Sigma_{z_{k}}}} \geq \mathcal{Q}(\epsilon^{1}_{i,k}), \\
    \frac{f_{i,k} + b_{i} + a^{\top}_{i} \bar{x}_{k}}{\sqrt{\Sigma_{z_{k}}}} \geq \mathcal{Q}(\epsilon^{2}_{i,k}).
\end{align}
Substituting the mean and the covariance dynamics given by \eqref{eqn_mean_prop} and \eqref{eqn_cov_prop} respectively, the desired result in obtained. 

\end{document}